\numberwithin{equation}{section}
\newtheorem{theorem}{Theorem}[section]
\newtheorem{proposition}[theorem]{Proposition}
\newtheorem{lemma}[theorem]{Lemma}
\newtheorem{remark}{Remark}
\newcommand\R{\mathbb R}
\newcommand{\aslabel}[1]{#1\def\@currentlabel{#1}}
\newcommand{\nowlabel}[1]{\def\@currentlabel{#1}}
\newcommand{\rob}[1]{\left({#1}\right)}
\newcommand{\sqb}[1]{\left[{#1}\right]}
\newcommand{\cub}[1]{\left\{{#1}\right\}}
\newcommand{\n}[1]{\left\|{#1}\right\|}
\newcommand{\m}[1]{\left|{#1}\right|}
\begin{document}

\journal{...}
\title{A nonsmooth variational approach to semipositone quasilinear problems in $\mathbb{R}^N$}


\author[1]{Jefferson Abrantes  Santos\fnref{t1}}  \address[1]{Universidade Federal de Campina Grande,  Unidade Acad\^emica de Matem\'atica, \\  CEP: 58429-900, Campina Grande - PB, Brazil.}  \ead{jefferson@mat.ufcg.edu.br}    
\author[1]{Claudianor O. Alves\fnref{t2}} \ead{coalves@mat.ufcg.edu.br}    
\author[3]{Eugenio Massa\fnref{t3}}\address[3]{{ Departamento de Matem\'atica,
                     Instituto de Ci\^encias Matem\'aticas e de Computa\c c\~ao, Universidade de S\~ao Paulo,
                     Campus de S\~ao Carlos, 13560-970, S\~ao Carlos SP, Brazil.}}
                  \ead{eug.massa@gmail.com}

\fntext[t1]{J. Abrantes Santos was partially supported by CNPq/Brazil 303479/2019-1}\fntext[t2]{C. O. Alves was partially supported by CNPq/Brazil 304804/2017-7}\fntext[t3]{E. Massa was partially  supported by grant $\#$303447/2017-6, CNPq/Brazil.}


\date{}

\begin{abstract}
	This paper concerns the existence of a solution for the following class of semipositone quasilinear problems 
	\begin{equation*}
		\left \{
		\begin{array}{rclcl}
			-\Delta_p u & =  & h(x)(f(u)-a) & \mbox{in} & \mathbb{R}^N, \\
			u& > & 0 & \mbox{in} & \mathbb{R}^N, \\
		\end{array}
		\right.
	\end{equation*}
where $1<p<N$, $a>0$, $ f:[0,+\infty) \to [0,+\infty)$ is a  function with subcritical growth and $f(0)=0$, while $h:\mathbb{R}^N \to (0,+\infty)$ is a continuous function that satisfies some technical conditions. 
We prove via nonsmooth critical points theory and comparison principle,  that a solution exists for $a$ small enough. We also provide a version of Hopf's Lemma  and a Liouville-type result for the $p$-Laplacian in the whole $\mathbb{R}^N$.
\medskip

\noindent  {\bf  Mathematical Subject Classification MSC2010:} 
35J20,   
35J62   
(49J52).   

\noindent {\bf Key words and phrases:} 
 semipositone problems;  quasilinear elliptic equations; nonsmooth nariational methods;  Lipschitz functional; positive solutions.
\end{abstract}

\maketitle	
\section{Introduction}
In this paper we study the existence of positive weak solutions for the $p$-Laplacian semipositone problem in the whole space 
\begin{equation}\label{Problem-P}\tag{$P_a$}
	\left \{
	\begin{array}{rclcl}
	-\Delta_p u & =  & h(x)(f(u)-a) & \mbox{in} & \mathbb{R}^N, \\
	u& > & 0 & \mbox{in} & \mathbb{R}^N, \\
	\end{array}
	\right.
\end{equation}
where $1<p<N$, $a>0$, $f:[0,+\infty) \to [0,+\infty)$ is a continuous function with subcritical growth and $f(0)=0$. Moreover, the function $h:\mathbb{R}^N \to (0,+\infty)$ is a continuous function satisfying  
\begin{itemize}
\item[(\aslabel{$P_{1}$})\label{Hp_P2_L1Li}]   $h\in L^{1}(\mathbb{R}^N)\cap L^{\infty}(\mathbb{R}^N)$, 
\item[(\aslabel{$P_{2}$})\label{Hp_P4_Bbeta}]  $h(x)<B|x|^{-\vartheta}$ for $x\neq0$, with $\vartheta>N$ and $B>0$.
\end{itemize}

An example of a function $h$ that satisfies the hypotheses \eqref{Hp_P2_L1Li}$-$\eqref{Hp_P4_Bbeta} is given below:
$$
h(x)=\frac{B}{1+|x|^{\vartheta}}, \quad \forall x \in \mathbb{R}^N.
$$

In the whole of this paper, we say that a function $u \in D^{1,p}(\mathbb{R}^N)$ is a weak solution for (\ref{Problem-P}) if
$u$ is a continuous positive function that verifies
$$
\int_{\mathbb{R}^N}|\nabla u|^{p-2}\nabla u \nabla v\,dx=\int_{\mathbb{R}^N}h(x)(f(u)-a)v\,dx, \quad \forall v \in D^{1,p}(\mathbb{R}^N).
$$

 \subsection{State of art.} 
The problem (\ref{Problem-P}) for $a = 0$ is very simple to be solved, either employing  the well known
mountain pass theorem due to Ambrosetti and Rabinowitz \cite{AmbRab1973}, or via minimization. However, for the
case where (\ref{Problem-P}) is semipositone, that is, when $a> 0$, the existence of a positive
solution is not so simple, because the standard arguments via the mountain pass
theorem combined with the maximum principle do not directly give a positive
solution for the problem, and in this case, a very careful analysis must be done.

The literature associated with semipositone problems in bounded domains is very rich since the appearance
of the paper by Castro and Shivaji \cite{CasShi1988} who were the first to consider this
class of problems. We have observed that there are different methods to prove the existence
and nonexistence of solutions, such as subsupersolutions, degree theory arguments,
fixed point theory and bifurcation; see for example \cite{AliCaShi}, \cite{AmbArcBuff},\cite{AllNisZecca}, \cite{AnuHaiShi1996} and their
references. In addition to these methods, also variational methods were  used in a
few papers as can be seen in \cite{AldHoSa19_semipos_omega}, \cite{CalCasShiUns2007}, \cite{CFELo}, \cite{CDS}, \cite{CQT}, \cite{CTY}, \cite{DC}, \cite{FigMasSan_KirchSemipos}, \cite{MR2008685} and \cite{Jea1_cont}. We would like to point out that in  \cite{CFELo}, Castro, de Figueiredo and Lopera studied the existence of solutions for the following class of semipositone quasilinear problems
\begin{equation}\label{Problem-P2}
	\left \{
	\begin{array}{rclcl}
		-\Delta_p u & =  & \lambda f(u) & \mbox{in} & \Omega, \\
		u(x)& > & 0 & \mbox{in} & \Omega, \\
		u & =  & 0 & \mbox{on} & \partial\Omega, \\
	\end{array}
	\right.
\end{equation}
where $\Omega \subset\mathbb{R}^{N}$, $N > p>2$, is a smooth bounded domain, $\lambda >0$ and $f:\mathbb{R} \to \mathbb{R}$ is a differentiable function with $f(0)<0$. In that paper, the authors assumed that there exist $q \in (p-1, \frac{Np}{N-p}-1), A,B>0 $ such that
$$
\left\{
\begin{array}{l}
	A(t^q-1)\leq f(t) \leq B(t^q-1), \quad \mbox{for} \quad t>0\\
	f(t)=0, \quad \mbox{for} \quad  t \leq -1.
\end{array}
\right.
$$
The existence of a solution was proved by combining the mountain pass theorem with the regularity theory. Motivated by the results proved in \cite{CFELo}, Alves, de Holanda and dos Santos \cite{AldHoSa19_semipos_omega} studied the existence of solutions for a large class of semipositone quasilinear  problems of the type
\begin{equation}\label{Problem-P3}
	\left \{
	\begin{array}{rclcl}
		-\Delta_\Phi u & =  & f(u)-a & \mbox{in} & \Omega, \\
		u(x)& > & 0 & \mbox{in} & \Omega, \\
		u & =  & 0 & \mbox{on} & \partial\Omega, \\
	\end{array}
	\right.
\end{equation}
where $\Delta_\Phi$ stands for the $\Phi$-Laplacian operator. The proof of the main result is also done via variational methods, however in their approach the regularity results
found in Lieberman \cite{L1,L2} play an important role. By using the mountain pass theorem, the authors found a
solution $u_a$ for all $a>0$. After that, by taking the limit when  $a$  goes to 0 and using the regularity results  in \cite{L1,L2}, they proved that $u_a$ is positive for $a$ small enough.

Related to semipositone problems in unbounded domains, we only found the paper due to Alves, de Holanda, and dos Santos \cite{AldHoSa19_semipos_RN} that studied the existence of solutions for the following class of problems
\begin{equation}\label{Problem-PAHS}
	\left \{
	\begin{array}{rclcl}
		-\Delta u & =  & h(x)(f(u)-a) & \mbox{in} & \mathbb{R}^N, \\
		u& > & 0 & \mbox{in} & \mathbb{R}^N, \\
	\end{array}
	\right.
\end{equation}
where $a>0$, $f:[0,+\infty) \to [0,+\infty)$ and $h:\mathbb{R}^N \to (0,+\infty)$ are continuous functions with $f$ having a subcritical growth and $h$ satisfying some technical conditions. The main tools used were  variational methods combined with the Riesz potential theory.

 \subsection{Statement of the main results.} 
Motivated by the results found in \cite{CFELo}, \cite{AldHoSa19_semipos_omega} and \cite{AldHoSa19_semipos_RN}, we intend to study the existence of solutions for (\ref{Problem-P}) with two different types of nonlinearities. In order to state our first main result, we assume the following conditions on $f$: 
\begin{itemize}
	\item[(\aslabel{$f_0$}) \label{Hp_forig}] \qquad $$\displaystyle\lim_{t\to 0^+} \frac{F(t)}{t^{p}}=0\,;$$ 
\end{itemize} 
\begin{itemize}
\item[(\aslabel{$f_{sc}$})\label{Hp_estSC}] \qquad there exists $q\in (1,p^*)$ such that 
$\displaystyle \limsup_{t\to +\infty} \frac{f(t)}{t^{q-1}}<\infty,$ 
\end{itemize} 
where   $p^*=\frac{pN}{N-p}$ is the critical Sobolev exponent;
\begin{itemize}
\item[(\aslabel{$f_\infty$}) \label{Hp_PS_SQ}] \qquad 
$q>p$  in \eqref{Hp_estSC} and there exist $\theta>p$ and $t_0>0$ such that 
\begin{eqnarray*}
	&& 
0<	\theta    F(t) \leq f(t)t, \quad \forall t>t_0,
\end{eqnarray*}
\end{itemize}
where $F(t)=\int_{0}^{t}f(\tau) \, d \tau$.	

Our first main result has the following statement

\begin{theorem} \label{Theorem1} Assume the conditions  \eqref{Hp_P2_L1Li}$-$\eqref{Hp_P4_Bbeta}, \eqref{Hp_estSC}, \eqref{Hp_forig} and \eqref{Hp_PS_SQ}.  
	Then there exists $a^{\ast}>0$ 
	such that, if $a\in[0,a^{\ast})$,  problem \eqref{Problem-P} has a positive weak solution $u_a\in C(\mathbb{R}^N) \cap D^{1,p}(\mathbb{R}^N)$. 
\end{theorem}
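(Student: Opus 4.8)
The plan is to recast \eqref{Problem-P} variationally on $D^{1,p}(\mathbb{R}^N)$ in a way that simultaneously encodes the sign constraint and produces a locally Lipschitz, non-$C^1$ energy, so that the nonsmooth critical point theory (Chang's extension of the mountain pass theorem via Clarke's generalized gradient) applies. Concretely, I would work with
\[
I_a(u)=\frac1p\int_{\mathbb{R}^N}|\nabla u|^p\,dx-\int_{\mathbb{R}^N}h(x)F(u^+)\,dx+a\int_{\mathbb{R}^N}h(x)u^+\,dx,
\]
whose first two terms are $C^1$ (the hypothesis $f(0)=0$ makes $s\mapsto F(s^+)$ of class $C^1$), while the last term is merely Lipschitz because $s\mapsto s^+$ is not differentiable at the origin; this is exactly where the nonsmooth framework becomes essential. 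A point $u$ with $0\in\partial I_a(u)$ then satisfies, weakly, $-\Delta_p u=h\,(f(u^+)-a\rho)$ for some measurable $\rho$ with $\rho=1$ on $\{u>0\}$, $\rho=0$ on $\{u<0\}$, and $\rho\in[0,1]$ on $\{u=0\}$.

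Next I would establish the nonsmooth mountain pass geometry: $I_a(0)=0$; a strict local minimum structure at the origin, where \eqref{Hp_forig} gives $\int h\,F(u^+)=o(\|u\|^p)$ and the added term $a\int h\,u^+\ge 0$ is favorable, so that $I_a(u)\ge \alpha>0$ on a small sphere (uniformly for $a$ in a bounded range); and a point $e$ with $I_a(e)<0$, which follows from the superlinearity $q>p$ and the Ambrosetti--Rabinowitz condition \eqref{Hp_PS_SQ}. The same condition \eqref{Hp_PS_SQ} should bound the nonsmooth Palais--Smale sequences. The principal technical hurdle here is \emph{compactness}: in the whole space $D^{1,p}(\mathbb{R}^N)$ does not embed compactly into the relevant $L^q$ spaces. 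I would resolve this using the weight $h$: conditions \eqref{Hp_P2_L1Li}--\eqref{Hp_P4_Bbeta} together with the subcritical growth \eqref{Hp_estSC} should yield compactness of the embedding into the weighted space $L^q_h$, restoring a Palais--Smale condition and producing a critical point $u_a$ at the positive mountain pass level.

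I would then show $u_a\ge 0$ by testing the equation with the negative part $u_a^{-}$: on $\{u_a<0\}$ one has $f(u_a^{+})=f(0)=0$ and $\rho=0$, so the reaction term vanishes there and $\int_{\{u_a<0\}}|\nabla u_a|^p\,dx=0$, forcing $u_a^{-}\equiv 0$ (the Liouville-type result for the $p$-Laplacian in $\mathbb{R}^N$ provides the clean way to conclude that a $p$-harmonic $D^{1,p}$ function with these boundary/decay properties is trivial). Interior regularity for the $p$-Laplacian (DiBenedetto--Tolksdorf) gives $u_a\in C^{1,\alpha}_{loc}$, hence $u_a\in C(\mathbb{R}^N)\cap D^{1,p}(\mathbb{R}^N)$, and on the open set $\{u_a>0\}$ (where $\rho=1$) it solves the genuine equation $-\Delta_p u_a=h(f(u_a)-a)$.

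The decisive and hardest step is to upgrade $u_a\ge 0$ to $u_a>0$ for all small $a$, which is the characteristic semipositone obstruction: on $\{u_a=0\}$ the reaction is $-a h\rho\le 0$, the wrong sign for the strong maximum principle, and in $\mathbb{R}^N$ there is no boundary on which to invoke a classical Hopf argument. I would first derive bounds on $\|u_a\|_{D^{1,p}}$ and $\|u_a\|_\infty$ uniform in $a$ (via the uniform mountain pass level and a Moser-type iteration), together with a uniform decay estimate at infinity obtained by comparison with barriers $c|x|^{-\gamma}$, where the fast decay $\vartheta>N$ in \eqref{Hp_P4_Bbeta} is exactly what makes such barriers available. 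Passing $a\to 0^{+}$ along a subsequence, the uniform $C^{1,\alpha}_{loc}$ and decay bounds give convergence to a nontrivial limit $u_0\ge 0$ solving $-\Delta_p u_0=h f(u_0)\ge 0$, which is strictly positive by the strong maximum principle of Vázquez. Finally, the $\mathbb{R}^N$-version of Hopf's lemma and the comparison principle, applied with the decay furnished by \eqref{Hp_P4_Bbeta}, should give a quantitative positive lower barrier for $u_0$ that persists, for $a$ small enough, to $u_a$ uniformly up to infinity; this forces $\{u_a=0\}=\emptyset$, whence $\rho\equiv 1$ and $u_a$ is a genuine positive solution of \eqref{Problem-P}. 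I expect the delicate part to be precisely this last propagation of strict positivity at infinity, where $u_a$ decays to zero while the reaction stays negative, which is what necessitates the tailored Hopf and Liouville results.
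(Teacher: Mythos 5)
Your setup is, in substance, identical to the paper's: your functional is exactly the paper's $I_a$ (since $F(u^+)-au^+=F_a(u)$ as defined in \eqref{eq_Fa}), and the nonsmooth mountain pass framework, the compact embedding into $L^q_h$ under \eqref{Hp_P2_L1Li}, the Palais--Smale argument via \eqref{Hp_PS_SQ}, the nonnegativity of critical points obtained by testing with $u_a^-$, the uniform $D^{1,p}$ and $L^\infty$ bounds, and the $C^{1,\beta}_{loc}$ convergence as $a\to0$ to a solution $u_0>0$ of the limit problem all follow the paper's Sections \ref{sec_prelim}--\ref{sec_estim}. (A secondary omission: you assert the limit $u_0$ is nontrivial, but this requires a uniform lower bound such as $\|u_{a}\|_{\infty,B_\gamma}\geq\delta$, which the paper derives in Lemma \ref{lemma6} from $\|u_a\|\geq k_{a_1}$ and the subsolution inequality; without it, $u_0\equiv0$ is not excluded.)

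The genuine gap is in your decisive final step. You propose to obtain, via the $\mathbb{R}^N$ Hopf lemma, a quantitative lower barrier for $u_0$ and then have it ``persist, for $a$ small enough, to $u_a$ uniformly up to infinity.'' This cannot work as stated, for two reasons. First, Hopf's lemma (Proposition \ref{hopf}) requires $-\Delta_p u\geq 0$ in all of $\mathbb{R}^N$: it applies to $u_0$, but \emph{not} to $u_a$ for $a>0$, whose reaction $h(f(u_a)-a)$ is negative wherever $u_a$ is small --- in particular near infinity, which is exactly the region at issue. Second, the convergence $u_a\to u_0$ is only uniform on compact sets; since both functions vanish at infinity, a lower bound of fundamental-solution type $c|x|^{(p-N)/(p-1)}$ for $u_0$ can be transferred to $u_a$ only through a weighted uniform estimate like $|x|^{(N-p)/(p-1)}|u_a-u_0|\to0$, which is precisely the Riesz-potential ingredient available only for $p=2$ and whose absence is the reason the paper develops a different argument. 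What actually closes the gap in the paper is Lemma \ref{lm_z}: an explicit radial barrier $z>0$ solving $-\Delta_p z=A$ in $B_r$ and $-\Delta_p z=-H|x|^{-\vartheta}$ outside $B_r$, hence decaying like $|x|^{(p-\vartheta)/(p-1)}$, strictly faster than the fundamental solution. Since $u_a$ is a weak supersolution of $-\Delta_p u=h(f(u)-a)$ by Lemma \ref{lm_prop_minim}, local uniform convergence gives $h f_{a}(u_a)\geq A$ on $B_r$ for small $a$, while \eqref{Hp_P4_Bbeta} and the calibration $aB<H$ give $h f_{a}(u_a)\geq -ah\geq -H|x|^{-\vartheta}$ outside $B_r$; comparison on $B_R$ (where $u_a\geq0=z_R$ on $\partial B_R$) yields $u_a\geq z_R$, and letting $R\to\infty$ gives $u_a\geq z>0$. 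The idea your proposal is missing is that the lower barrier for $u_a$ must itself solve an equation with a suitably \emph{negative} right-hand side at infinity, matched to the semipositone term $-ah(x)$; no barrier of $p$-harmonic (Hopf) type can play this role.
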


As mentioned above, a version of Theorem \ref{Theorem1} was proved in  \cite{AldHoSa19_semipos_RN} in the semilinear case $p=2$. Their proof exploited  variational methods for $C^1$ functionals and Riesz potential theory in order to prove the positivity of the solutions of a smooth approximated problem, which then resulted to be  actual solutions of problem \eqref{Problem-P}. In our setting, since we are working with the $p$-Laplacian, that is a nonlinear operator, we do not have a Riesz potential theory analogue that works well for this class of operator. Hence, 
a different approach was developed in order to treat the problem (\ref{Problem-P}) for $p \not =2$. Here, we make a different approximation for problem \eqref{Problem-P}, which results in working with a {\it nonsmooth approximating functional}. 

As a result, the Theorem \ref{Theorem1} is also new when $p=2$, since the set of hypotheses we assume here is different. In fact, avoiding the use of the Riesz theory, we do not need to assume that $f$ is Lipschitz (which would not even be possible in the case of condition \eqref{Hp_forig_up} below), and a different condition on the decaying of the function $h$ is required.

 The use of the nonsmooth approach turns out to simplify several technicalities usually involved in the treatment of semipositone problems. Actually,  working with the  $C^1$ functional naturally associated to \eqref{Problem-P}, one obtains critical points $u_a$ that may be negative somewhere. 
 When working in bounded sets, the positivity of $u_a$ is obtained, in the limit as $a\to0$, by proving  convergence in $C^1$ sense to the positive solution $u_0$ of the case $a=0$, which is enough since $u_0$ has also normal derivative at the boundary which is bounded away from zero in view of the Hopf's Lemma. This approach can be seen for instance in \cite{Jea1_cont,CFELo,AldHoSa19_semipos_omega,FigMasSan_KirchSemipos}.
In $\R^n$, a different argument must be used: actually one can obtain convergence on compact sets, but the limiting solution  $u_0$ goes to zero at infinity as $|x|^{(p-N)/(p-1) }$ (see Remark \ref{rm_udec}), which means that one needs to be able to do some finer estimates on the convergence. 
  In \cite{AldHoSa19_semipos_RN}, with $p=2$,   the use of the Riesz potential, allowed to prove  that  $|x|^{N-2 }|u_a-u_0|\to0$ uniformly, which then led to the positivity of $u_a$ in the limit. 
  
In the lack of this tool, we had to find a different way to   prove the positivity of $u_a$. 
The great advantage of our approach via nonsmooth analysis, is that our critical points $u_a$ will always be nonnegative functions (see Lemma \ref{lm_prop_minim}). In spite of not necessary being week solutions of the equation in problem   \eqref{Problem-P},  they  turn out to be supersolutions  and also subsolutions of the limit equation with $a=0$. These properties will allow us to use comparison principle in order to prove the strict positivity of $u_a$ with the help of a suitable  barrier function (see the Lemmas \ref{lm_z} and \ref{lm_ujpos}). From the positivity it will immediately follow that $u_a$ is indeed a weak  solutions of \eqref{Problem-P}.

\par\medskip

The reader is invited to see that by \eqref{Hp_PS_SQ}, there exist $A_1,B_1>0$ such that
\begin{equation} \label{AR}
	F(t) \geq A_1|t|^{\theta}-B_1, \quad for \  t\geq0.
\end{equation}
This inequality yields that the functional we will be working with is not bounded from below.		
On the other hand, the condition \eqref{Hp_forig} will produce a ``range of mountains" geometry around the origin for the functional, which  completes the mountain pass structure.  
Finally, conditions  \eqref{Hp_estSC} and \eqref{Hp_PS_SQ} impose a subcritical growth to $f$, which are used to obtain the required compactness condition.

\par\medskip
	
Next, we are going to state our second result. For this result, we still assume \eqref{Hp_estSC} together with the following conditions:  
\begin{itemize}
\item[(\aslabel{$\widetilde f_0$}) \label{Hp_forig_up}]  $$\displaystyle\lim_{t\to 0^+} \frac{F(t)}{t^{p}}=\infty\,;$$ 
\end{itemize} 
\begin{itemize}
\item[(\aslabel{$\widetilde f_\infty$}) \label{Hp_PS_sQ}]\qquad $q<p$ in \eqref{Hp_estSC}. 
\end{itemize}

Our second main result is the following:

\begin{theorem} \label{Theorem2} Assume the conditions  \eqref{Hp_P2_L1Li}$-$\eqref{Hp_P4_Bbeta}, \eqref{Hp_estSC}, \eqref{Hp_forig_up} and   \eqref{Hp_PS_sQ}. 
Then there exists $a^{\ast}>0$ 
such that, if $a\in[0,a^{\ast})$, 
 problem \eqref{Problem-P} has 
 a positive weak solution $u_a\in C(\mathbb{R}^N) \cap D^{1,p}(\mathbb{R}^N)$. 
\end{theorem}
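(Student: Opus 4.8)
The plan is to follow the same overall strategy as for Theorem \ref{Theorem1} --- produce a nonnegative critical point of the nonsmooth functional associated to \eqref{Problem-P}, then upgrade it to a strictly positive genuine weak solution via a barrier and the comparison principle --- but to replace the mountain pass argument by a \emph{direct minimization}, which is exactly what the new hypotheses \eqref{Hp_forig_up}--\eqref{Hp_PS_sQ} make available. Concretely, I would work with the locally Lipschitz functional
\[
I_a(u)=\frac{1}{p}\int_{\mathbb{R}^N}|\nabla u|^p\,dx-\int_{\mathbb{R}^N}h(x)F(u^+)\,dx+a\int_{\mathbb{R}^N}h(x)u^+\,dx,\qquad u\in D^{1,p}(\mathbb{R}^N),
\]
where $u^+=\max\{u,0\}$, the nonsmoothness being carried by the positive part.

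First I would establish that $I_a$ is coercive and bounded from below. This is where \eqref{Hp_PS_sQ} enters: since $q<p$, the growth estimate in \eqref{Hp_estSC} gives $\int h F(u^+)\le C\int h(1+(u^+)^q)$, and combining $h\in L^1\cap L^\infty$ with H\"older's inequality and the embedding $D^{1,p}(\mathbb{R}^N)\hookrightarrow L^{p^*}(\mathbb{R}^N)$ bounds this by $C(1+\n{\nabla u}_p^q)$ with $q<p$; likewise the linear term is $O(\n{\nabla u}_p)$. Hence $I_a(u)\to+\infty$ as $\n{\nabla u}_p\to\infty$. Weak lower semicontinuity is then immediate for the gradient term by convexity, while the two weighted terms are in fact weakly continuous because the integrable weight $h$ with the decay \eqref{Hp_P4_Bbeta} prevents any loss of mass at infinity. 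Therefore $I_a$ attains its infimum at some $u_a\in D^{1,p}(\mathbb{R}^N)$, and since $\n{\nabla u}_p^p\ge\n{\nabla u^+}_p^p$ one has $I_a(u^+)\le I_a(u)$, so the minimizer may be taken with $u_a\ge0$ (this is Lemma \ref{lm_prop_minim}).

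To guarantee that $u_a$ is nontrivial I would use \eqref{Hp_forig_up}: because $F(t)/t^p\to\infty$ as $t\to0^+$, the functional $I_0$ (the case $a=0$) satisfies $\inf I_0<0$, attained at a positive solution $u_0$ of the limit equation $-\Delta_p u=h f(u)$; then $I_a(u_0)=I_0(u_0)+a\int h u_0<0$ for $a$ small, so $\inf I_a<0$ and $u_a\ne0$. Reading off the condition $0\in\partial I_a(u_a)$ in the sense of Clarke shows that $u_a$ solves $-\Delta_p u_a=h f(u_a)-a h\rho_a$ with $\rho_a\in[0,1]$ and $\rho_a=1$ on $\{u_a>0\}$; as $f\ge0$ and $0\le\rho_a\le1$, the function $u_a$ is a nonnegative subsolution of the $a=0$ equation and a supersolution of $-\Delta_p u=h(f(u)-a)$.

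The final and hardest step is strict positivity, identical in spirit to the $a\to0$ analysis behind Theorem \ref{Theorem1}. The difficulty is genuinely at infinity: by Remark \ref{rm_udec} the limit profile $u_0$ decays like $|x|^{(p-N)/(p-1)}$, so comparison on compact sets is not enough and one needs a lower barrier with the correct decay. I would therefore invoke the positive barrier $z$ of Lemma \ref{lm_z} together with the comparison principle of Lemma \ref{lm_ujpos} to conclude $u_a\ge z>0$ on all of $\mathbb{R}^N$ for $a$ in a range $[0,a^{\ast})$. Once $u_a>0$ everywhere, $\rho_a\equiv1$, so $u_a$ is an honest weak solution of \eqref{Problem-P}, and the continuity $u_a\in C(\mathbb{R}^N)$ follows from the standard regularity theory for the $p$-Laplacian. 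I expect the construction and matching of the barrier at infinity to be the main obstacle, precisely because the Riesz potential estimate available for $p=2$ is lost here.
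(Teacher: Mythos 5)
Your proposal is correct and follows essentially the same route as the paper's proof of Theorem \ref{Theorem2}: direct minimization of the nonsmooth functional (coercivity from $q<p$ in \eqref{Hp_PS_sQ}, weak lower semicontinuity, a negative infimum produced by \eqref{Hp_forig_up} uniformly for small $a$), nonnegativity and the sub/supersolution properties of the minimizer, and then strict positivity for $a$ small via the decaying barrier of Lemma \ref{lm_z} and the comparison argument of Lemma \ref{lm_ujpos}, after which the minimizer is a genuine weak solution of \eqref{Problem-P}. The only deviations are harmless implementation details: you obtain $u_a\geq 0$ by truncation ($I_a(u^+)\leq I_a(u)$, valid for a global minimizer) rather than by testing the variational inequality with $u_a^-$, and you express the Euler--Lagrange condition through Clarke's subdifferential with a multiplier $\rho_a\in[0,1]$ rather than through the generalized directional derivatives used in Lemma \ref{lm_prop_minim}.
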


In the proof of Theorem \ref{Theorem2}, the condition \eqref{Hp_forig_up} will produce a situation where the origin is not a local minimum  for the energy functional, while  \eqref{Hp_PS_sQ} will make the functional coercive, in view of  \eqref{Hp_estSC}.  It will be then possible to obtain solutions via minimization. As in the proof of Theorem \ref{Theorem1}, we will work with a nonsmooth approximating functional that will give us an approximate solution. After some computation, we prove that this approximate solution is in fact a solution for the original problem when $a$ is small enough.

\par\medskip

   \begin{remark} Observe that if  $f,h$ satisfy the  set of conditions of Theorem \ref{Theorem1} or those of Theorem \ref{Theorem2} and $u$ is a solution of Problem \eqref{Problem-P},  then the rescaled function $v=a^{\frac{-1}{q-1}}u$ is a solution of the problem:

 	\begin{equation}\label{Problem-P_resc}
  	\left \{
  	\begin{array}{rclcl}
  	-\Delta_p v & =  & a^{\frac{(q-p)}{q-1}} h(x)(\widetilde f_a(v)-1) & \mbox{in} & \mathbb{R}^N, \\
  	v& > & 0 & \mbox{in} & \mathbb{R}^N, \\
  	\end{array}
  	\right.
  	\end{equation}	
  which then takes the form of Problem \eqref{Problem-P2}, with $\lambda:=a^{\frac{(q-p)}{q-1}}$ and a new nonlinearity $\widetilde f_a(t)=a^{-1}f(a^{\frac{1}{q-1}}t)$, which satisfies the same hipotheses of $f$. In particular, if $f(t)=t^{q-1}$ then $\widetilde f_a\equiv f$.

In the conditions of Theorem \ref{Theorem1}, where  $q>p$, we obtain a solution of Problem \eqref{Problem-P_resc} for suitably small values of $\lambda$, while in the conditions of Theorem \ref{Theorem2}, where $q<p$, solutions are obtained    for suitably large values of $\lambda$.
 
 It is worth noting that, as $a\to0$,  the solutions of Problem \eqref{Problem-P} that we obtain are bounded and converge, up to subsequences,  to a solution of Problem \eqref{Problem-P} with $a=0$ (see Lemma \ref{lemma6}). As a consequence, the corresponding solutions of Problem \eqref{Problem-P_resc} satisfy $v(x)\to \infty$ for every $x\in\R^N$.

Semipositone problems formulated as in \eqref{Problem-P_resc} were considered recently in \cite{CoRQTeh_semipos,PeShSi_semiposCrit}.
 \end{remark}

 \par\medskip

 As a final result, we also show that, by the same technique used to prove the positivity of our solution, it is possible to obtain a version of Hopf's Lemma for the $p$-Laplacian in the whole $\mathbb{R}^N$, see Proposition  \ref{hopf}. A further consequence is the following Liouville-type result:

\begin{proposition}\label{prop_Liou}
Let $N>p>1$ and  $u \in D^{1,p}(\mathbb{R}^N) \cap C_{loc}^{1,\alpha}(\mathbb{R}^N)$ be a solution of problem:
\begin{equation}
\left \{
\begin{array}{rclcl}
-\Delta_p u & =  & g(x)  \hat f(u)& \mbox{in} & \mathbb{R}^N, \\
u& \geq & 0 & \mbox{in} & \mathbb{R}^N, \\
\end{array}
\right.
\end{equation}
where $\hat f,g$ are continuous,   $g(x)>0$  in $\mathbb{R}^N$ and $\hat f(0)=0$ while $\hat f(t)>0$ for $t>0$. If $\liminf_{|x|\to\infty} |x|^{(N-p)/(p-1)}u(x)=0$, then $u\equiv0$.
\end{proposition}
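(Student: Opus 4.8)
The plan is to argue by contradiction, exploiting the fact that the critical exponent $(N-p)/(p-1)$ is precisely the decay rate of the radial $p$-harmonic functions in $\mathbb{R}^N$. Writing $\beta=(N-p)/(p-1)$, a direct computation shows that $|x|^{-\beta}$ is a positive $p$-harmonic function on $\mathbb{R}^N\setminus\{0\}$, and this is exactly the barrier against which I would compare $u$, in the same spirit as the positivity arguments of Lemmas \ref{lm_z} and \ref{lm_ujpos}. Suppose then that $u\not\equiv0$. Since $u\geq0$ and $-\Delta_p u=g(x)\hat f(u)\geq0$ (because $g>0$ and $\hat f(u)\geq0$ for $u\geq0$), the function $u$ is $p$-superharmonic. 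First I would upgrade $u\not\equiv0$ to strict positivity: if $u(x_0)>0$ at some point, then $\hat f(u)>0$ nearby, and the strong maximum principle, which is the very mechanism behind Proposition \ref{hopf}, forces $u>0$ on all of $\mathbb{R}^N$. In particular, fixing any radius $R>0$, continuity and compactness give a constant $m:=\min_{\partial B_R}u>0$.

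Next, on each annulus $A_\rho=\{R<|x|<\rho\}$ I would introduce the explicit radial $p$-harmonic comparison function
\[
w_\rho(x)=m\,\frac{|x|^{-\beta}-\rho^{-\beta}}{R^{-\beta}-\rho^{-\beta}},
\]
which satisfies $-\Delta_p w_\rho=0$ in $A_\rho$, with $w_\rho=m$ on $\partial B_R$ and $w_\rho=0$ on $\partial B_\rho$. Since $u\geq m=w_\rho$ on $\partial B_R$ and $u\geq0=w_\rho$ on $\partial B_\rho$, while $u$ is $p$-superharmonic and $w_\rho$ is $p$-harmonic, the comparison principle for the $p$-Laplacian yields $u\geq w_\rho$ throughout $A_\rho$. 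A short computation shows that $w_\rho$ is increasing in $\rho$ and converges pointwise, as $\rho\to\infty$, to $m R^{\beta}|x|^{-\beta}$, so passing to the limit gives the lower bound
\[
u(x)\geq m R^{\beta}\,|x|^{-\beta}\qquad\text{for all } |x|\geq R.
\]

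Finally, multiplying by $|x|^{\beta}$ produces $|x|^{(N-p)/(p-1)}u(x)\geq m R^{\beta}>0$ for every $|x|\geq R$, whence $\liminf_{|x|\to\infty}|x|^{(N-p)/(p-1)}u(x)\geq m R^{\beta}>0$. This contradicts the hypothesis that the liminf vanishes, and therefore $u\equiv0$.

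The step I expect to demand the most care is the application of the comparison principle on the annuli together with the passage to the limit $\rho\to\infty$: one must check that $u$ and $w_\rho$ are admissible competitors, which is where the regularity $u\in C^{1,\alpha}_{loc}(\mathbb{R}^N)$ and the membership $u\in D^{1,p}(\mathbb{R}^N)$ enter, and one must justify the monotone limit of the barriers. The strict positivity of $u$ obtained from the strong maximum principle is the other delicate ingredient, but since it is precisely the mechanism already developed for Proposition \ref{hopf}, it can be invoked directly rather than reproved here.
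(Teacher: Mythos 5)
Your proof is correct, but it takes a genuinely different route from the paper's. The paper obtains Proposition \ref{prop_Liou} as an immediate consequence of its Hopf-type Proposition \ref{hopf}: if $u\not\equiv0$, continuity alone gives a small ball on which $u>0$, hence on which $g(x)\hat f(u)\geq A>0$; comparison with the explicit piecewise barrier of Lemma \ref{lm_z_h} (the radial function solving $-\Delta_p z=A$ inside the ball and $-\Delta_p z=0$ outside) on expanding balls, exactly as in Lemma \ref{lm_ujpos}, then yields $u(x)\geq C|x-x_0|^{(p-N)/(p-1)}$ at infinity, contradicting the vanishing liminf. You instead invoke the strong maximum principle for the $p$-Laplacian (V\'azquez) to get $u>0$ everywhere, and then compare with the purely $p$-harmonic barriers $w_\rho$, built from the fundamental-solution profile $|x|^{-(N-p)/(p-1)}$, on expanding annuli, the lower bound being driven by $m=\min_{\partial B_R}u>0$ rather than by the source term. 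Both arguments are sound, and the trade-offs are instructive. The paper's route reuses machinery it has already built and needs no strong maximum principle: the positivity feeding the comparison is generated by the right-hand side, which is exactly where the hypothesis $\hat f(t)>0$ for $t>0$ enters. Your route imports the SMP as an external ingredient --- and, to be precise, it is not ``the very mechanism behind Proposition \ref{hopf}'', which the paper proves by explicit comparison, not by the SMP --- but in exchange it never uses the positivity of $\hat f$ at all: only $\hat f\geq0$, to ensure $-\Delta_p u\geq0$. So you in fact prove the stronger, purely potential-theoretic statement that any nontrivial nonnegative $p$-superharmonic function in $D^{1,p}(\mathbb{R}^N)\cap C^{1,\alpha}_{loc}(\mathbb{R}^N)$ has $\liminf_{|x|\to\infty}|x|^{(N-p)/(p-1)}u(x)>0$. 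Incidentally, you could remove the SMP from your own argument and make it as elementary as the paper's: center the annuli at a point $x_0$ with $u(x_0)>0$ and use a small sphere $\partial B_\epsilon(x_0)$ as the inner boundary, on which $\min u>0$ by continuity; the rest of your comparison and the passage $\rho\to\infty$ go through verbatim.
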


 \subsection{Organization of the article.} 
 This article is organized as follows: in Section \ref{sec_prelim}, we prove the existence of a nonnegative solution, denoted by $u_a$, for a class of approximate problems. In Section \ref{sec_estim}, we establish some properties involving the approximate solution $u_a$. In Section \ref{sec_prfmain}, we prove the Theorems \ref{Theorem1} and \ref{Theorem2} respectively. Finally, in Section \ref{sec_hopf}, we prove the Proposition  \ref{hopf} about Hopf's Lemma for the $p$-Laplacian in the whole $\mathbb{R}^N$.

 \subsection{Notations.} Throughout this paper, the letters $c$, $c_{i}$, $C$, $C_{i}$, $i=1, 2, \ldots, $ denote positive constants which vary from line to line, but are independent of terms that take part in any limit process. Furthermore, we denote the norm of $L^{p}(\Omega)$ for any $p\geq 1$ by $\|\,.\,\|_{p}$. In some places we will use $"\rightarrow"$,   $"\rightharpoonup"$ and $"\stackrel{*}{\rightharpoonup}"$ to denote the strong convergence, weak convergence and weak star convergence, respectively.

\section{Preliminary results}\label{sec_prelim}

In the sequel, we consider the discontinuous function $f_a:\mathbb{R} \longrightarrow\mathbb{R}$  given by
\begin{equation}\label{eq_fa_d}
	f_a(t)=\left \{
	\begin{array}{ccl}
	f(t)-a & \mbox{if}  & t\geq 0, \\
	0 &  & t<0, \\
	\end{array}
	\right.
\end{equation}
and its primitive  
\begin{equation}\label{eq_Fa}
	F_a(t)=\displaystyle\int_{0}^{t} f_{a}(\tau)d\tau=\left \{
	\begin{array}{ccl}
	F(t)-at & \mbox{if}  & t\geq 0, \\
	0& \mbox{if} &  t\leq 0. \\
	\end{array}
	\right.
\end{equation}
A direct computation gives  
\begin{equation}\label{eq_Fa_est}
	-at^+\leq F_a(t)\leq \left \{\begin{array}{ccl}
	F(t) & \mbox{if}  & t\geq 0, \\
	0 & \mbox{if}  & t\leq 0, 
	\end{array}\right.
\end{equation} 
where $t^+=\max\{t,0\}$.

Our intention is to prove the existence of a positive solution for the following auxiliary problem
\begin{equation}\label{Problem-PA}\tag{AP$_a$}
	\left \{
	\begin{array}{rclcl}
	-\Delta_p u & =  & h(x)f_a(u) & \mbox{in} & \mathbb{R}^N, \\
	u& > & 0 & \mbox{in} & \mathbb{R}^N, \\
	\end{array}
	\right.
\end{equation}
 because  such a solution is also a solution of \eqref{Problem-P}. 

 \medskip 
 
Associated with \eqref{Problem-PA}, we have the energy functional $I_a:D^{1,p}(\mathbb{R}^N)\longrightarrow\mathbb{R}$ defined by
\begin{equation*}
	I_a(u)=\frac{1}{p}\int_{\mathbb{R}^N}|\nabla u|^pdx -\int_{\mathbb{R}^N} h(x)F_{a}(u)dx,
\end{equation*}
which is only locally Lipschitz.

 Hereafter, we will endow $D^{1,p}(\mathbb{R}^N)=\left\{u\in L^{p^*}(\mathbb{R}^N);\, \nabla u\in L^p(\mathbb{R}^N,\mathbb{R}^N)\right\} $ with 
 the usual norm
 $$
 \|u\|=\left( \int_{\mathbb{R}^N}|\nabla u|^{p}\,dx \right)^{\frac{1}{p}}.
 $$
 Since the Gagliardo-Nirenberg-Sobolev inequality (see \cite{Evans})
 $$
 \|u\|_{p^*}\leq S_{N,p} \|u\|
 $$
 holds for all $u\in D^{1,p}(\mathbb{R}^N)$ for some constant $S_{N,p}>0$, we have that the embedding 
 \begin{equation} \label{IM}
 	D^{1,p}(\mathbb{R}^N)\hookrightarrow L^{p^*}(\mathbb{R}^N)
 \end{equation}
 is continuous.
The  following Lemma provides us an useful compact embedding for $D^{1,p}(\mathbb{R}^N)$.
\begin{lemma}\label{l1}
 	Assume \eqref{Hp_P2_L1Li}. 
 	 Then, the embedding $D^{1,p}(\mathbb{R}^N)\hookrightarrow L^q_h(\mathbb{R}^N)$ is continuous and compact for every $q\in [1,p^*)$.  
\end{lemma}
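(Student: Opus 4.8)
The plan is to prove both assertions by exploiting the continuous Sobolev embedding \eqref{IM} into $L^{p^*}(\mathbb{R}^N)$ together with the integrability of $h$ provided by \eqref{Hp_P2_L1Li}. Here $L^q_h(\mathbb{R}^N)$ denotes the weighted Lebesgue space with norm $\|u\|_{L^q_h}=\left(\int_{\mathbb{R}^N}h(x)|u|^q\,dx\right)^{1/q}$. The key elementary observation, used throughout, is that since $h\in L^1(\mathbb{R}^N)\cap L^\infty(\mathbb{R}^N)$, interpolation yields $h\in L^s(\mathbb{R}^N)$ for every $s\in[1,\infty]$.

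For the \emph{continuity}, I would fix $q\in[1,p^*)$ and apply H\"older's inequality with the conjugate exponents $r=p^*/q$ and $r'=p^*/(p^*-q)\in(1,\infty)$ to get
\begin{equation*}
\int_{\mathbb{R}^N}h|u|^q\,dx\leq\left(\int_{\mathbb{R}^N}|u|^{p^*}\,dx\right)^{q/p^*}\left(\int_{\mathbb{R}^N}h^{r'}\,dx\right)^{1/r'}=\|u\|_{p^*}^q\,\|h\|_{r'}.
\end{equation*}
The factor $\|h\|_{r'}$ is finite by the interpolation remark, and then \eqref{IM} gives $\|u\|_{L^q_h}\leq \|h\|_{r'}^{1/q}S_{N,p}\|u\|$, which is the desired continuous embedding.

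For the \emph{compactness}, it suffices to show that every bounded sequence in $D^{1,p}(\mathbb{R}^N)$ has a subsequence converging in $L^q_h(\mathbb{R}^N)$; since $D^{1,p}(\mathbb{R}^N)$ is reflexive, I may assume $u_n\rightharpoonup u$ and set $M=\sup_n\|u_n\|_{p^*}<\infty$. I would then split the integral over a ball $B_R$ and its complement. On the tail the same H\"older estimate yields
\begin{equation*}
\int_{|x|>R}h|u_n-u|^q\,dx\leq (2M)^q\left(\int_{|x|>R}h^{r'}\,dx\right)^{1/r'},
\end{equation*}
and since $h^{r'}\in L^1(\mathbb{R}^N)$, the right-hand side is made smaller than any prescribed $\varepsilon$ by choosing $R$ large, \emph{uniformly in} $n$. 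On the fixed ball $B_R$, the bounds on $\|\nabla u_n\|_p$ and $\|u_n\|_{L^{p^*}(B_R)}$ make $(u_n)$ bounded in $W^{1,p}(B_R)$, so Rellich--Kondrachov provides, along a subsequence extracted diagonally over $R=1,2,\dots$, strong convergence $u_n\to u$ in $L^q(B_R)$; as $h\in L^\infty$, this forces $\int_{B_R}h|u_n-u|^q\,dx\to0$. Combining the two estimates gives $\|u_n-u\|_{L^q_h}\to0$.

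The main obstacle is the uniform control of the tail: it is precisely here that the global integrability $h\in L^1(\mathbb{R}^N)$ in \eqref{Hp_P2_L1Li} is essential, through the absolute continuity of the integral of $h^{r'}$. I note that the pointwise decay condition \eqref{Hp_P4_Bbeta} is not needed for this lemma. The only delicate technical point is the extraction of a single subsequence valid for every radius $R$, which I would settle by the standard diagonal argument combined with the uniform tail bound.
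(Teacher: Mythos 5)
Your proof is correct, and while the continuity half coincides with the paper's (the same H\"older estimate with exponents $p^*/q$ and $p^*/(p^*-q)$, with $h$ in every $L^s$, $1\le s\le\infty$, by interpolation of \eqref{Hp_P2_L1Li}), the compactness half takes a genuinely different route. The paper takes a weakly null sequence $u_n\rightharpoonup 0$, uses the compact embedding $D^{1,p}(\mathbb{R}^N)\hookrightarrow L^p(B_R)$ only to extract an a.e.\ convergent subsequence, and then exploits duality: $\{|u_n|^q\}$ is bounded in $L^{p^*/q}(\mathbb{R}^N)$ and tends to $0$ a.e., hence $|u_n|^q\rightharpoonup 0$ weakly in $L^{p^*/q}(\mathbb{R}^N)$, and testing this weak convergence against the single admissible test function $\varphi=h\in L^{p^*/(p^*-q)}(\mathbb{R}^N)$ yields $\int_{\mathbb{R}^N}h|u_n|^q\,dx\to 0$ with no decomposition of $\mathbb{R}^N$ at all. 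You instead run the classical ball-plus-tail argument: Rellich--Kondrachov on $B_R$ (where only $h\in L^\infty$ is used), a tail estimate uniform in $n$ coming from the absolute continuity of the integral of $h^{r'}$ (where $h\in L^1$ is used), and a diagonal extraction. Both arguments are complete. The paper's is shorter and avoids any explicit tail estimate or diagonal procedure, since the behavior of $h$ at infinity is absorbed into its membership in the dual Lebesgue space; the price is reliance on the standard lemma that a.e.\ convergence plus boundedness in $L^s$, $1<s<\infty$, implies weak $L^s$ convergence. Your argument is more elementary and self-contained, makes the uniform integrability mechanism (quantitative tail control, uniform in $n$) explicit rather than implicit, and proves sequential compactness of bounded sequences directly instead of the weak-to-strong property, the two being equivalent here by reflexivity of $D^{1,p}(\mathbb{R}^N)$. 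Your remark that \eqref{Hp_P4_Bbeta} is not needed is consistent with the paper, whose lemma assumes only \eqref{Hp_P2_L1Li}.
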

\begin{proof}
The continuity is obtained by H\"older inequality, using  \eqref{IM} and \eqref{Hp_P2_L1Li}:
\begin{equation} \label{I1}
\int_{\mathbb{R}^N}h|u|^{q}\,dx \leq \n{h}_{r}\|u\|^q_{p^*}\leq C_h\|u\|^q, \quad \forall u \in D^{1,p}(\mathbb{R}^N), 
\end{equation}
where $r=p^*/(p^*-q)$ is dual to $p^*/q$.

Let $\{u_n\}$ be a sequence in $D^{1,p}(\mathbb{R}^N)$ with  $u_n\rightharpoonup 0\ \mbox{in}\ D^{1,p}(\mathbb{R}^N).$ For each $R>0$, we have the continuous embedding $D^{1,p}(\mathbb{R}^N) \hookrightarrow W^{1,p}(B_R(0))$. Since the embedding $W^{1,p}(B_R(0)) \hookrightarrow L^p(B_R(0))$ is compact, it follows that $D^{1,p}(\mathbb{R}^N) \hookrightarrow L^p(B_R(0))$ is a compact embedding as well. Hence, for some subsequence, still denoted by itself,  
$$
u_n(x)\rightarrow 0\ \mbox{a.e. in}\ \mathbb{R}^N.
$$
By the continuous embedding \eqref{IM}, we also know that  $\{|u_n|^{q}\}$ is a bounded sequence in $L^{\frac{p^*}{q}}(\mathbb{R}^N)$. Then, up to a subsequence if necessary, 
$$
|u_n|^{q}\rightharpoonup 0 \mbox{ in }   L^{\frac{p^*}{q}}(\mathbb{R}^N), 
$$
or equivalently,
$$
\int_{\mathbb{R}^N}|u_n|^{q}\varphi dx \to 0, \quad   \forall \varphi \in L^{r}(\mathbb{R}^N).
$$
As \eqref{Hp_P2_L1Li}  guarantees that $h \in L^{r}(\mathbb{R}^N)$, it follows that 
$$
\int_{\mathbb{R}^N}h(x)|u_n|^qdx \to 0.
$$ 
This shows that $u_n \to 0$ in $L^q_h(\mathbb{R}^N)$, finishing the proof.
\end{proof}

We also give the following result that will be used later.
\begin{lemma} \label{CN} 
If $u_n\rightharpoonup u$ in $D^{1,p}(\mathbb{R}^N)$ and \eqref{Hp_P2_L1Li}
holds, then    
$$
\int_{\mathbb{R}^N}h(x)|u_n-u||u_n|^{q-1}\,dx \to 0 \quad \mbox{as} \quad n \to +\infty, \quad \forall q\in[ 1,p^*).
$$	
\end{lemma}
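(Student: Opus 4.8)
The plan is to reduce the claim to the compact embedding already established in Lemma~\ref{l1}, by means of a weighted H\"older inequality that distributes the weight $h$ between the two factors of the integrand. First I would dispose of the trivial case $q=1$ separately: there $|u_n|^{q-1}\equiv1$, so the quantity to estimate is exactly $\int_{\mathbb{R}^N}h(x)|u_n-u|\,dx$. Since $u_n-u\rightharpoonup0$ in $D^{1,p}(\mathbb{R}^N)$, applying Lemma~\ref{l1} to the sequence $\{u_n-u\}$ with exponent $1$ gives convergence to zero directly.

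For the main case $q\in(1,p^*)$, I would write $h=h^{1/q}\,h^{(q-1)/q}$, which is legitimate since $h>0$, and factor the integrand as $\bigl(h^{1/q}|u_n-u|\bigr)\bigl(h^{(q-1)/q}|u_n|^{q-1}\bigr)$. Applying H\"older's inequality with the conjugate exponents $q$ and $q/(q-1)$ then yields
\begin{equation*}
\int_{\mathbb{R}^N}h|u_n-u||u_n|^{q-1}\,dx \leq \Bigl(\int_{\mathbb{R}^N}h|u_n-u|^{q}\,dx\Bigr)^{1/q}\Bigl(\int_{\mathbb{R}^N}h|u_n|^{q}\,dx\Bigr)^{(q-1)/q}.
\end{equation*}
The crucial feature of this split is that each resulting integral is again a genuine $h$-weighted $L^q$ quantity of the type controlled by Lemma~\ref{l1}.

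It then remains to treat the two factors independently. For the first factor I would once more set $v_n=u_n-u$, observe that $v_n\rightharpoonup0$, and invoke the compactness part of Lemma~\ref{l1} to conclude that $\int_{\mathbb{R}^N}h|u_n-u|^{q}\,dx\to0$. For the second factor I would use the continuity estimate \eqref{I1} together with the boundedness of $\{u_n\}$ in $D^{1,p}(\mathbb{R}^N)$ (a weakly convergent sequence is bounded), obtaining $\int_{\mathbb{R}^N}h|u_n|^{q}\,dx\leq C_h\|u_n\|^q\leq C$ uniformly in $n$. The right-hand side is thus a vanishing sequence multiplied by a bounded one, so the product tends to zero, which proves the claim.

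There is no serious obstacle here; the argument is essentially a corollary of Lemma~\ref{l1}. The only point requiring a little care is the correct allocation of the weight $h$ in the H\"older step, so that both factors remain weighted $L^q$ norms amenable to Lemma~\ref{l1}, rather than unweighted integrals that the hypotheses \eqref{Hp_P2_L1Li} would not directly govern.
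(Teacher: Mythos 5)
Your proof is correct, and it takes a genuinely different route from the paper's. The paper applies H\"older's inequality with the exponent pair $r=p^*/(q-1)$ and $r'=p^*/(p^*-(q-1))$, keeping the \emph{entire} weight $h$ on the factor $|u_n-u|$: the term $\||u_n|^{q-1}\|_{r}$ is then bounded through the unweighted Sobolev embedding \eqref{IM}, while $\|h(u_n-u)\|_{r'}\to 0$ is obtained by applying Lemma \ref{l1} with $h^{r'}$ in place of $h$, which requires observing that $h^{r'}$ again satisfies \eqref{Hp_P2_L1Li}. You instead distribute the weight, writing $h=h^{1/q}h^{(q-1)/q}$, and apply H\"older with the exponents $q$ and $q/(q-1)$, so that both resulting factors are $h$-weighted $L^q$ quantities; Lemma \ref{l1} as stated then does all the work, its compactness part giving $\int_{\mathbb{R}^N}h|u_n-u|^q\,dx\to0$ for $v_n=u_n-u\rightharpoonup0$, and its continuity estimate \eqref{I1} giving the uniform bound on $\int_{\mathbb{R}^N}h|u_n|^q\,dx$ via boundedness of weakly convergent sequences. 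What your split buys is economy: you never need to check that a power of $h$ inherits \eqref{Hp_P2_L1Li}, and the weighted embedding is invoked only at the single exponent $q$ appearing in the statement. What the paper's split buys is a slightly stronger intermediate fact ($h(u_n-u)\to0$ in $L^{r'}(\mathbb{R}^N)$) and one formula covering $q=1$ as well (there $r=\infty$), whereas you treat $q=1$ as a separate, trivial case --- a harmless bifurcation. Both arguments draw their compactness from the same source, Lemma \ref{l1}, so they are of equal strength for the purposes of the paper.
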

\begin{proof}
Set $r=\frac{p^*}{q-1}\in \left(\frac{p^*}{p^*-1}, \infty\right]$ and $r'=\frac{p^*}{p^*-(q-1)}\in [1,p^*)$ its dual exponent. 
First note that $\{u_n\}$ is bounded in $D^{1,p}(\mathbb{R}^N)$, and so, $\{|u_n|^{q-1}\}$ is bounded in $L^r(\mathbb{R}^N)$ 
by \eqref{IM}, while $h|u_n-u| \to0$ in $L^{r'}$ 
since we can apply  Lemma \ref{l1} with $h^{r'}$ in the place of $h$, which also satisfies condition \eqref{Hp_P2_L1Li}. Then by H\"older inequality 
  	$$
     	\int_{\mathbb{R}^N}h(x)|u_n-u||u_n|^{q-1}\,dx \leq  \|h(u_n-u)\|_{r'}\|u_n^{q-1}\|_{r}\to 0.
 	$$
\end{proof}

\subsection{Critical points theory for  the functional $I_a$ }
As mentioned in the last subsection, the functional $I_a$ is only locally Lipschitz in $D^{1,p}(\mathbb{R}^N)$, then we cannot use variational methods for $C^1$ functionals. Having this in mind, we will then use the theory of critical points  for locally Lipschitz functions  in a Banach space, see Clarke \cite{Clarke_nonsmooth} for more details. 
 
First of all, we recall that $u\in D^{1,p}(\mathbb{R}^N)$  is a critical point of $I_a$ if  
\begin{equation}\label{eq_varminim}
 \int_{\mathbb{R}^N}|\nabla u|^{p-2}\nabla u \nabla v\,dx+\int_{\mathbb{R}^N}h(x)(-F_a)^0(u,v)\,dx\geq 0, \quad \forall v \in D^{1,p}(\mathbb{R}^N),
\end{equation}
where $$(-F_a)^0(t,s)=\limsup_{\xi\searrow0,\,\tau\to t}\frac {-F_a(\tau+\xi s)+F_a(\tau)}{\xi }$$ indicates the generalized directional derivative of $-F_a$ at the point $t$ along the direction $s$.

It is easy to see that  a global minimum is always  a critical point; moreover, an analogous of the classical mountain pass theorem holds true (see \cite{Chang81_VMnsm}), where a critical point  in the sense of \eqref{eq_varminim} is obtained at the usual minimax level provided the following form of (PS)-condition holds true:
\begin{itemize}
\item [(\aslabel{$PS_L$}) \label{PSL}] 
 If $\{u_n\}$ is a sequence in $D^{1,p}(\mathbb{R}^N)$ such that $\{I_a(u_n)\}$ is bounded and   	    	  	  	   
\begin{equation}\label{eq_varPS}
    \int_{\mathbb{R}^N}|\nabla u_n|^{p-2}\nabla u_n \nabla v\,dx+\int_{\mathbb{R}^N}h(x)(-F_a)^0(u_n,v)\,dx\geq- \varepsilon_n\n{v}, 
\end{equation}
$\forall v \in D^{1,p}(\mathbb{R}^N),$ where $\varepsilon_n\to0$,   then $\{u_n\}$ admits a convergent subsequence. 
\end{itemize}

 \par\medskip
 
In the next Lemma,  let us collect some useful properties that can be derived by the definition of critical points of $I_a$, given in \eqref{eq_varminim}.   
\begin{lemma}\label{lm_prop_minim}
Assume \eqref{Hp_P2_L1Li} 
and (\ref{Hp_estSC}). Then a  critical point $u_a$ of the functional $I_a$, as defined in  \eqref{eq_varminim},   has the following properties:
\begin{enumerate}
\item $u_a\geq 0$ in $\R^N$;   
\item if $u_a>0$ in $\R^N$ then it is a weak solution of problem \eqref{Problem-PA}, and also a solution of problem \eqref{Problem-P};
\item  $u_a$ is a weak subsolution of  $-\Delta_p u=h(x)f(u)$ in $\R^N$;
\item   $u_a$  is a weak supersolution of   $-\Delta_p u=h(x)(f(u)-a)$ in $\R^N$.
\end{enumerate}
\end{lemma}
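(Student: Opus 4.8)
The heart of the matter is to pin down the generalized directional derivative $(-F_a)^0(t,s)$, after which all four items follow from suitable choices of test function in \eqref{eq_varminim}. Regarding $-F_a$ as a locally Lipschitz function of one real variable, its Clarke generalized gradient is the singleton $\{a-f(t)\}$ when $t>0$ and $\{0\}$ when $t<0$, while at the jump $t=0$ it is the whole interval $[0,a]$, namely the convex hull of the one-sided limits $\lim_{t\to0^-}(-F_a)'(t)=0$ and $\lim_{t\to0^+}(-F_a)'(t)=a-f(0)=a$. Using $(-F_a)^0(t,s)=\max\{\zeta s:\zeta\in\partial(-F_a)(t)\}$ this gives $(-F_a)^0(t,s)=(a-f(t))s$ for $t>0$, the value $0$ for $t<0$, and the asymmetric expression
\[
(-F_a)^0(0,s)=
\begin{cases}
as & \text{if } s\geq0,\\
0 & \text{if } s\leq0.
\end{cases}
\]
It is precisely this asymmetry at the jump that will distinguish the subsolution property from the supersolution property.

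For item (1) the plan is to test \eqref{eq_varminim} with $v=u_a^-:=\max\{-u_a,0\}\in D^{1,p}(\mathbb{R}^N)$. On $\{u_a\geq0\}$ both $v$ and $\nabla v$ vanish; on $\{u_a<0\}$ one has $F_a\equiv0$ in a neighbourhood of $u_a$, so the directional derivative term is $0$ there, while the gradient term equals $\int_{\{u_a<0\}}|\nabla u_a|^{p-2}\nabla u_a\cdot(-\nabla u_a)\,dx=-\n{u_a^-}^p$. Hence \eqref{eq_varminim} collapses to $-\n{u_a^-}^p\geq0$, which forces $u_a^-\equiv0$, i.e.\ $u_a\geq0$.

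For item (2), if $u_a>0$ everywhere then for a.e.\ $x$ the function $-F_a$ is $C^1$ near $u_a(x)$, so $(-F_a)^0(u_a,v)=(a-f(u_a))v$ is linear in $v$; inserting both $v$ and $-v$ into \eqref{eq_varminim} turns the inequality into the equality $\int_{\mathbb{R}^N}|\nabla u_a|^{p-2}\nabla u_a\nabla v\,dx=\int_{\mathbb{R}^N}h(x)(f(u_a)-a)v\,dx$ for every $v$, which is exactly the weak formulation of \eqref{Problem-PA}; since $u_a>0$ gives $f_a(u_a)=f(u_a)-a$, this is also a weak solution of \eqref{Problem-P}.

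For items (3) and (4) I would combine $u_a\geq0$ with the two test directions $v=\pm w$, $w\geq0$. Choosing $v=w$ yields the supersolution property: assembling the contribution $(a-f(u_a))w$ on $\{u_a>0\}$ with the jump value $aw$ on $\{u_a=0\}$ (where $f(u_a)=f(0)=0$) gives $-\int_{\mathbb{R}^N}h(x)(-F_a)^0(u_a,w)\,dx=\int_{\mathbb{R}^N}h(x)(f(u_a)-a)w\,dx$, so that $\int_{\mathbb{R}^N}|\nabla u_a|^{p-2}\nabla u_a\nabla w\,dx\geq\int_{\mathbb{R}^N}h(x)(f(u_a)-a)w\,dx$. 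Choosing instead $v=-w$ yields the subsolution property: now the direction $-w\leq0$ kills the directional derivative on $\{u_a=0\}$, leaving $\int_{\mathbb{R}^N}|\nabla u_a|^{p-2}\nabla u_a\nabla w\,dx\leq\int_{\{u_a>0\}}h(x)(f(u_a)-a)w\,dx\leq\int_{\mathbb{R}^N}h(x)f(u_a)w\,dx$, where the last step discards the nonpositive term $-\int h a w$ and uses $f(0)=0$. The main obstacle is thus not any single estimate but the careful nonsmooth bookkeeping at the jump $t=0$: one must compute $(-F_a)^0(0,\cdot)$ correctly and then match signs, choosing the test direction $\pm w$ so that the relevant one-sided value of $(-F_a)^0(0,\cdot)$ pushes the inequality in the direction demanded by the sub- or supersolution definition.
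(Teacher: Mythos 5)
Your proposal is correct and follows essentially the same path as the paper's proof: compute $(-F_a)^0$ explicitly (your formula agrees with the paper's \eqref{eq_Fa0}), test \eqref{eq_varminim} with $u_a^-$ to get item (1), and test with $\pm v$ (respectively $\pm\phi$, $\phi\geq0$) to get items (2)--(4), with the sign asymmetry of $(-F_a)^0(0,\cdot)$ doing exactly the work you describe. The only cosmetic difference is that you derive \eqref{eq_Fa0} via Clarke's generalized gradient and the max formula, while the paper computes it directly from the definition of the generalized directional derivative.
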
  
\begin{proof}
Straightforward calculations give 
\begin{equation}
  	\label{eq_Fa0}
  	(-F_a)^0(t,s)=\begin{cases}
  	-(f(t)-a)s& \mbox{for $t>0,\ s\in\R$}\\
  	as&\mbox{for $t=0,\,s>0$}\\
  	0& \mbox{for $\begin{cases}t<0,\ s\in\R\\t=0,\,s\leq0\,.\end{cases}$}
  	\end{cases}
\end{equation}
By using $u_a^-=\max\cub{0,-u_a}$ as a test function in \eqref{eq_varminim} we get   
$$-\n {u_a^-}^p=  \int_{\mathbb{R}^N}|\nabla u_a|^{p-2}\nabla u_a \nabla u_a^-\,dx\geq-\int_{\mathbb{R}^N}h(x)(-F_a)^0(u_a,u_a^-)\,dx \geq 0\,,$$
then $u_a^-\equiv0$ and then (1.) is proved.
  
If $u_a>0$ in $supp(\phi)$, then from  \eqref{eq_varminim}, 
\begin{equation*}
  \int_{\mathbb{R}^N}|\nabla u_a|^{p-2}\nabla u_a \nabla \phi\,dx\geq
  -\int_{\mathbb{R}^N}h(x)(-F_a)^0(u_a,\phi)\,dx=\int _{\mathbb{R}^N} h(x)f_a(u_a) \phi\,dx\,,
\end{equation*}
and by testing also with $-\phi$ one obtains equality, then (2.) is proved.

If $\phi\geq0$ in \eqref{eq_varminim} then 
\begin{equation*}
  \int_{\mathbb{R}^N}|\nabla u_a|^{p-2}\nabla u_a \nabla \phi\,dx\geq
  -\int_{\mathbb{R}^N}h(x)(-F_a)^0(u_a,\phi)\,dx\geq \int_{\mathbb{R}^N}h(x)(f(u_a)-a)\phi\,dx\,;
\end{equation*}
by  testing with $-\phi$ one obtains
\begin{equation*}
  - \int_{\mathbb{R}^N}|\nabla u_a|^{p-2}\nabla u_a \nabla \phi\,dx\geq
  -\int_{\mathbb{R}^N}h(x)(-F_a)^0(u_a,-\phi)\,dx\geq -\int_{\mathbb{R}^N}h(x)f(u_a)\phi\,dx\,.
\end{equation*}
The above analysis guarantees that, for every $\phi\in  D^{1,p}(\R^N)$ with $\phi\geq0$, 
\begin{equation}\label{eq_subsup}
  \int_{\mathbb{R}^N}h(x)(f(u_a)-a)\phi\,dx\leq \int_{\mathbb{R}^N}|\nabla u_a|^{p-2}\nabla u_a \nabla \phi\,dx\leq\int_{\mathbb{R}^N}h(x)f(u_a)\phi\,dx \,,
\end{equation} 
which proves the claims (3.) and (4.).  
\end{proof}

\subsection{Mountain pass geometry}\label{sec_MP}
Throughout this subsection we assume the hypotheses of Theorem \ref{Theorem1}. 
The next two Lemmas will be useful to prove that in this case $I_a$ verifies the mountain pass geometry. 	
\begin{lemma}\label{lemma1}
	There exist $\rho,\alpha>0$ such that 
	$$I_a(u)\geq\alpha, \qquad\text{ for $\n u=\rho$ and any $a\geq0$.}$$
\end{lemma}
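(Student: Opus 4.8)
The plan is to bound $I_a$ from below on a small sphere $\n u=\rho$, keeping every constant uniform in $a\geq0$. The first step is to convert the hypotheses on $f$ into a single pointwise growth estimate for the primitive $F$. From \eqref{Hp_forig} one gets $F(t)\leq \varepsilon t^p$ for $t$ near $0^+$, while \eqref{Hp_estSC} gives $f(t)\leq C t^{q-1}$ for large $t$ and hence a bound of order $t^q$ at infinity (here $q>p$ by \eqref{Hp_PS_SQ}). Combining the two regimes and absorbing the bounded behaviour on any compact interval into the $t^q$ term, I would obtain: for every $\varepsilon>0$ there is $C_\varepsilon>0$ such that
\[
F(t)\leq \varepsilon t^p + C_\varepsilon t^q, \qquad \forall\, t\geq0.
\]

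Next I would remove the dependence on $a$ from the potential term. By \eqref{eq_Fa_est} we have the pointwise bound $F_a(u)\leq F(u^+)$, which is valid for \emph{every} $a\geq0$; applying the growth estimate with $t=u^+$ then yields
\[
\int_{\R^N} h(x)\,F_a(u)\,dx \leq \varepsilon \int_{\R^N} h(x)|u|^p\,dx + C_\varepsilon \int_{\R^N} h(x)|u|^q\,dx .
\]
Since both $p$ and $q$ lie in $[1,p^*)$, the continuous embeddings of Lemma \ref{l1} (equivalently inequality \eqref{I1}) give $\int_{\R^N} h|u|^p\,dx\leq C_1\n u^p$ and $\int_{\R^N} h|u|^q\,dx\leq C_2\n u^q$.

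Putting these estimates into the definition of $I_a$ gives
\[
I_a(u)\geq \tfrac1p\n u^p - \varepsilon C_1 \n u^p - C_\varepsilon C_2 \n u^q .
\]
Choosing $\varepsilon$ so small that $\tfrac1p-\varepsilon C_1\geq \tfrac1{2p}$, I obtain $I_a(u)\geq \n u^p\big(\tfrac1{2p}-C_\varepsilon C_2\,\n u^{\,q-p}\big)$. The decisive point is that $q>p$, so the factor in parentheses remains positive when $\n u=\rho$ for $\rho$ small enough, producing $I_a(u)\geq\alpha>0$ with $\alpha,\rho$ independent of $a$. I do not expect a genuine obstacle here: this is the standard mountain-pass estimate near the origin adapted to the weighted space, and the only point requiring care is the uniformity in $a\geq0$, which is guaranteed precisely because the bound $F_a(u)\leq F(u^+)$ holds simultaneously for all $a\geq0$.
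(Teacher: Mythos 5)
Your proof is correct and follows essentially the same route as the paper: the uniform-in-$a$ bound $F_a(t)\leq \varepsilon|t|^p+C_\varepsilon|t|^q$ (via \eqref{eq_Fa_est}, \eqref{Hp_forig}, \eqref{Hp_estSC}), the weighted embedding of Lemma \ref{l1}, and then the choice of small $\varepsilon$ followed by small $\rho$ using $q>p$. The only difference is that you spell out the two-regime derivation of the growth estimate, which the paper states directly.
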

\begin{proof}
Notice that, in view of \eqref{Hp_forig}, \eqref{Hp_estSC} and \eqref{eq_Fa_est},  given $\epsilon>0$, there exists $C_{\epsilon}>0$ such that  
$$
F_a(t)\leq{\epsilon}|t|^{p}+ C_{\epsilon}{|t|^{q}} ,\quad \forall t\in\R.
$$ 			
Thus, by Lemma \ref{l1},
$$\int_{\R^N}h(x)F_a(u(x))dx\leq {\epsilon}C\|u\|^{p}+ C_{\epsilon}C{\|u\|^{q}}, \quad \forall u\in D^{1,p}(\R^N).$$
Thereby, setting $\n u=\rho$, we obtain
$$I_a(u)\geq \rho^p\rob{\frac1p-\varepsilon  C-{CC_\varepsilon}\rho^{q-p}}\,.$$
Now, fixing $\varepsilon=1/(2pC) $ and choosing  $\rho$ sufficiently small such that $CC_\varepsilon \rho^{q-p}\leq 1/4p$, so that the term in parentheses is at least $1/4p$, we see that   the claim is satisfied by taking  $\alpha =(1/4p)\rho^p$.  
\end{proof}

\begin{lemma}\label{lemma2}
	There exists $v\in D^{1,p}(\mathbb{R}^N) $ and $a_1>0$ such that $\|v\|>\rho$ and $I_a(v)< 0$, for all $a \in [0,a_1)$.
\end{lemma}
\begin{proof}
Fix a function
$$
\varphi\in C_{0}^{\infty}(\mathbb{R}^N) \setminus \{0\},  \quad \mbox{with} \quad \varphi \geq 0  \quad \mbox{and} ~ ||\varphi||=1.
$$
Note that for all $t>0$, 
\begin{eqnarray*}
	I_{a}(t\varphi) &=& \frac1p t^p- \int_{\Omega}h(x)F_a(t\varphi)dx \\
	&=&\frac1p t^p- \int_{\Omega}h(x)F(t\varphi)\,dx + a\int_{\Omega}h(x)t\varphi\, dx,
\end{eqnarray*}
where $\Omega=supp \,\varphi$. Now, estimating with \eqref{AR} and assuming that $a$ is bounded in some set $[0,a_1)$, we find
\begin{eqnarray}\label{eq_est_abv_phi}
I_{a}(t\varphi) & \leq& \frac1p t^p- {A_1t^{\theta}}\int_{\R^N}h(x)\varphi^\theta dx+ B_1\n h_1+t a_1\int_{\R^N}h(x)\varphi dx \,.
\end{eqnarray}
Since $h>0$ the two integrals are positive, and using the fact that $\theta>p>1$, we can fix $t_1>\rho$ large enough so that $I_a(v)<0$, where $v=t_1\varphi\in D^{1,p}(\mathbb{R}^N)$.
\end{proof}

In the sequel, we are going to prove the  version of  (PS)-condition required in the critical points theory for Lipshitz functionals, for the functional  $I_a$. To do this, observe that \eqref{Hp_PS_SQ} yields that $f_a$ also satisfies the famous condition due to Ambrosetti-Rabinowitz, that is,
there exists $T>0$, which does not depend on $a\geq 0$, such that
\begin{equation}\label{ARCondition}
\theta F_a(t) \leq tf_a(t)+T, \quad  t\in\mathbb{R}\,.
\end{equation}

\begin{lemma}\label{lemma3}
For all $a\geq0$, the functional $I_a$ satisfies the condition \eqref{PSL}.
\end{lemma}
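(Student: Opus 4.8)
The plan is to run the classical three-step Palais--Smale argument, adapted to the nonsmooth inequality \eqref{eq_varPS}. Given a sequence $\{u_n\}$ with $\{I_a(u_n)\}$ bounded and satisfying \eqref{eq_varPS} with $\varepsilon_n\to0$, I would first prove that $\{u_n\}$ is bounded in $D^{1,p}(\R^N)$, then extract a weak limit $u_n\rightharpoonup u$, and finally upgrade this to strong convergence by exploiting the monotonicity (the $(S_+)$ property) of $-\Delta_p$.

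For the boundedness, I would test \eqref{eq_varPS} with $v=u_n$. From the explicit formula \eqref{eq_Fa0} one checks that $(-F_a)^0(u_n,u_n)=-f_a(u_n)u_n$ a.e., so this choice gives $\int_{\R^N}h(x)f_a(u_n)u_n\,dx\leq\n{u_n}^p+\varepsilon_n\n{u_n}$. Feeding this into the Ambrosetti--Rabinowitz inequality \eqref{ARCondition}, which bounds $\int_{\R^N}h F_a(u_n)$ by $\tfrac1\theta\int_{\R^N}h f_a(u_n)u_n$ up to a multiple of $\n{h}_1$, and combining with the boundedness of $I_a(u_n)$, the strictly positive factor $\tfrac1p-\tfrac1\theta$ produced by $\theta>p$ (see \eqref{Hp_PS_SQ}) yields $\rob{\tfrac1p-\tfrac1\theta}\n{u_n}^p\leq C+C\varepsilon_n\n{u_n}$; since $p>1$ and $\varepsilon_n\to0$, this forces $\{u_n\}$ to be bounded.

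With $u_n\rightharpoonup u$ fixed (along a subsequence), the decisive step is to test \eqref{eq_varPS} with $v=u-u_n$ and to show that the nonlinear term $\int_{\R^N}h(x)(-F_a)^0(u_n,u-u_n)\,dx$ tends to $0$. Using \eqref{eq_Fa0} together with the subcritical bound $0\leq f(t)\leq C(1+t^{q-1})$ for $t\geq0$ coming from \eqref{Hp_estSC}, I would derive the pointwise estimate $\m{(-F_a)^0(u_n,u-u_n)}\leq C(1+a)\m{u-u_n}+C\m{u_n}^{q-1}\m{u-u_n}$. The first term vanishes by Lemma \ref{l1} with exponent $1$ (i.e.\ $u_n\to u$ in $L^1_h(\R^N)$), and the second is exactly controlled by Lemma \ref{CN}. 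This is the step I expect to be the main obstacle, as it is where the discontinuity of $f_a$ (hence the need for the generalized directional derivative) must be reconciled with the weighted compactness of the embedding.

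Once both this nonlinear term and the right-hand side $-\varepsilon_n\n{u-u_n}$ are shown to vanish, \eqref{eq_varPS} gives $\limsup_n\int_{\R^N}\m{\nabla u_n}^{p-2}\nabla u_n\,\nabla(u_n-u)\,dx\leq0$. Since $u_n\rightharpoonup u$ implies $\int_{\R^N}\m{\nabla u}^{p-2}\nabla u\,\nabla(u_n-u)\,dx\to0$, I would subtract and use the elementary monotonicity inequality $\rob{\m{\nabla u_n}^{p-2}\nabla u_n-\m{\nabla u}^{p-2}\nabla u}\nabla(u_n-u)\geq0$ to conclude that $\int_{\R^N}\rob{\m{\nabla u_n}^{p-2}\nabla u_n-\m{\nabla u}^{p-2}\nabla u}\nabla(u_n-u)\,dx\to0$. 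By the $(S_+)$ property of $-\Delta_p$ this yields $\nabla u_n\to\nabla u$ in $L^p$, that is $u_n\to u$ strongly in $D^{1,p}(\R^N)$, which is the convergent subsequence demanded by \eqref{PSL}.
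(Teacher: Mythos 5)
Your proof is correct and follows essentially the same route as the paper's: testing \eqref{eq_varPS} with $v=u_n$ and combining with the Ambrosetti--Rabinowitz inequality \eqref{ARCondition} to get boundedness, then using the pointwise bound on $(-F_a)^0$ from \eqref{eq_Fa0} and \eqref{Hp_estSC} together with Lemma \ref{CN} (and Lemma \ref{l1}) to kill the nonsmooth term, and concluding by the $(S_+)$ property of the $p$-Laplacian. The only cosmetic differences are that the paper tests with both signs $\pm u_n$ and $\pm(u_n-u)$ to obtain two-sided estimates, while you use one-sided estimates upgraded by the monotonicity of $\xi\mapsto|\xi|^{p-2}\xi$; both are valid.
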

\begin{proof}
Observe that, by \eqref{eq_Fa0},  $(-F_a)^0(t,\pm t)= \mp f_a(t)t$ for all $t \in \mathbb{R}$.  Then, from   \eqref{eq_varPS},  
$$
\m{\int_{\mathbb{R}^N}|\nabla u_n|^{p}\,dx-\int_{\mathbb{R}^N}h(x)f_a(u_n) u_n\,dx }\leq \varepsilon_n\n{u_n}.
$$
For $n$ large enough, we assume  $\varepsilon_n<1$ so we get 
\begin{equation}\label{ineq1-lemma1}
   -\|u_n\|-\|u_n\|^{p} \leq-\int_{\mathbb{R}^N} h(x)f_{a}(u_n)u_n dx\,.
\end{equation}
On the other hand, since   $|I_a(u_n)|\leq K$ for some $K>0$, it follows that 
\begin{equation}\label{ineq2-lemma1}
   \frac{1}{p}\|u_n\|^{p} - \int_{\mathbb{R}^N}h(x)F_a(u_n)dx\leq K, \quad \forall n \in\mathbb{N}.
\end{equation}
From \eqref{ARCondition} and \eqref{ineq2-lemma1}, 
\begin{equation}\label{ineq3lemma1}
   \frac{1}{p}\|u_n\|^{p} - \frac{1}{\theta}\int_{\mathbb{R}^N}h(x)f_a(u_n)u_n\,dx-\frac{1}{\theta}T\|h\|_1 \leq K, \quad 
\end{equation}
thereby, by \eqref{ineq1-lemma1} and \eqref{ineq3lemma1},
$$
\left(\frac{1}{p}-\frac{1}{\theta}\right)|| u_n||^p - \frac{1}{\theta}|| u_n||\leq K+\frac{1}{\theta}T\|h\|_1, 
$$
for $n$ large enough. This shows that $\{u_n\}$ is bounded in $D^{1,p}(\mathbb{R}^N)$. Thus, without loss of generality, we may assume that
$$
u_n\rightharpoonup u ~~ \mbox{in} ~~ D^{1,p}(\mathbb{R}^N)
$$
and
$$
u_n(x) \to u(x) \quad \mbox{a.e. in} \quad \mathbb{R}^N.
$$
By \eqref{eq_Fa0} and conditions \eqref{Hp_estSC}-\eqref{Hp_forig}, there exists $C>0$ that does not dependent on $a$ such that
$$
|(-F_a)^0(t,s)|\leq \rob{C(|t|^{q-1}+|t|)+a}|s|
$$
and so, 
$$
|h(x)(-F_a)^0(u_n,u_n-u)| \leq Ch(x)|u_n-u|(|u_n|^{q-1}+|u_n|+a).
$$
By Lemma \ref{CN},  we have the limit 
$$
  	\int_{\mathbb{R}^N} h(x)(-F_{a})^0(u_n,\pm(u_n-u)) dx \to 0,
$$
that combines with the inequalities  below, obtained from \eqref{eq_varPS},
\begin{multline}
 	-\varepsilon_n\n{u-u_n} -\int_{\mathbb{R}^N}h(x)(-F_a)^0(u_n,u-u_n)\,dx \\\leq \int_{\mathbb{R}^N}|\nabla u_n|^{p-2}\nabla u_n \nabla (u-u_n)\,dx\\\leq\int_{\mathbb{R}^N}h(x)(-F_a)^0(u_n,u_n-u)\,dx+ \varepsilon_n\n{u-u_n},
\end{multline}   	   
to give 
\begin{equation}\label{conv1lemma3}
\int_{\mathbb{R}^N}|\nabla u_n|^{p-2}\nabla u_n \nabla (u-u_n)\,dx\to 0.
\end{equation}
The weak convergence $u_n\rightharpoonup u$ in $D^{1,p}(\mathbb{R}^N)$ yields
\begin{equation}\label{conv2lemma3}
\int_{\mathbb{R}^N} |\nabla u|^{p-2}\nabla u\nabla (u_n-u)dx \to 0.
\end{equation}
From \eqref{conv1lemma3},  \eqref{conv2lemma3} and the (S+) property of the $p$-Laplacian, we deduce that  $u_n\to u$ in   $D^{1,p}(\mathbb{R}^N)$, finishing the proof. Here, the Simon's inequality found in \cite[Lemma A.0.5]{PA} plays an important role to conclude the strong convergence. 
\end{proof}

Next, we obtain a critical point for $I_a$, by the mountain pass theorem for Lipschitz functionals. Furthermore, we will make explicit the dependence of the constants on the bounded interval $[0,\overline a)$ where the parameter $a$ is taken, by using as subscript its endpoint, which we still have to fix, while we will not mention their dependence on $h$ and  $f$.
\begin{lemma}\label{lm_ua}
There exists a constant $C_{a_1}>0$	such that $I_a$ has a critical point  $u_a\in D^{1,p}(\R^N)$  satisfying $0<\alpha\leq I_a(u_a)\leq C_{a_1}$, for every  $a\in[0,a_1)$.  
\end{lemma}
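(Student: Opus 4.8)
The plan is to realize $u_a$ as a mountain pass critical point of the locally Lipschitz functional $I_a$, invoking the nonsmooth mountain pass theorem of Chang \cite{Chang81_VMnsm}, and then to read off the two-sided bound $\alpha\le I_a(u_a)\le C_{a_1}$ from the geometry. First I would record that $I_a(0)=0$ (since $F_a(0)=0$) and assemble the three ingredients already available: Lemma \ref{lemma1} gives the encircling ``mountain range'' $I_a(u)\ge\alpha>0$ on $\{\|u\|=\rho\}$, valid for every $a\ge0$; Lemma \ref{lemma2} gives a point $v$ with $\|v\|>\rho$ and $I_a(v)<0$ for all $a\in[0,a_1)$; and Lemma \ref{lemma3} provides the required Palais--Smale condition \eqref{PSL}. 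With these in hand, setting
$$c_a=\inf_{\gamma\in\Gamma}\,\max_{t\in[0,1]}I_a(\gamma(t)),\qquad \Gamma=\big\{\gamma\in C\big([0,1],D^{1,p}(\R^N)\big):\ \gamma(0)=0,\ \gamma(1)=v\big\},$$
the theorem produces a critical point $u_a$ in the sense of \eqref{eq_varminim} with $I_a(u_a)=c_a$.

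The lower bound is then immediate and uniform in $a$: since $\|0\|=0<\rho<\|v\|$, every $\gamma\in\Gamma$ crosses the sphere $\{\|u\|=\rho\}$, on which $I_a\ge\alpha$ by Lemma \ref{lemma1}, so $\max_{t}I_a(\gamma(t))\ge\alpha$ and hence $c_a\ge\alpha>0$.

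The only delicate point, and what I expect to be the crux, is the uniform upper bound $c_a\le C_{a_1}$ for all $a\in[0,a_1)$ with a single constant. Here I would estimate $c_a$ from above by evaluating the minimax on the straight segment joining $0$ to $v=t_1\varphi$, whose image is $\{t\varphi:0\le t\le t_1\}$; along it $I_a$ equals $I_a(t\varphi)$, which is controlled uniformly by the explicit inequality \eqref{eq_est_abv_phi}. This yields
$$c_a\le\max_{0\le t\le t_1}I_a(t\varphi)\le\sup_{t\ge0}\Big(\tfrac1p\,t^p-A_1t^{\theta}\!\int_{\R^N}h\varphi^{\theta}\,dx+B_1\|h\|_1+t\,a_1\!\int_{\R^N}h\varphi\,dx\Big)=:C_{a_1}.$$
Because $\theta>p$ and the two integrals are strictly positive and independent of $a$, the negative $t^{\theta}$-term eventually dominates and the supremum is finite; moreover, having replaced $a$ by the fixed endpoint $a_1$ in the last term, $C_{a_1}$ depends only on $a_1$ (together with the fixed data $\varphi,h,f$) and not on the particular $a\in[0,a_1)$. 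This gives the claimed uniform upper bound and completes the proof.
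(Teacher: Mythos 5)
Your proposal is correct and follows essentially the same route as the paper: apply Chang's nonsmooth mountain pass theorem using Lemmas \ref{lemma1}, \ref{lemma2} and \ref{lemma3}, obtain the lower bound $\alpha$ from the mountain pass level, and bound the level from above by $\max\{I_a(t\varphi);\, t\geq 0\}$ via the uniform estimate \eqref{eq_est_abv_phi} with $a$ replaced by $a_1$. Your write-up merely makes explicit two points the paper leaves implicit (the crossing of the sphere $\{\|u\|=\rho\}$ by every admissible path, and the finiteness of the supremum since $\theta>p$), which is fine.
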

\begin{proof}
The Lemmas \ref{lemma1}, \ref{lemma2} and \ref{lemma3} guarantee that we can apply the mountain pass theorem for Lipchitz functionals due to \cite{Chang81_VMnsm} 
to show the existence of a critical point $u_a \in D^{1,p}(\mathbb{R}^N)$ for all $a \in [0,a_1)$, with $I_a(u_a)=d_a\geq \alpha >0$, where $d_a$ is the mountain pass level associated with $I_a$.

Now, taking  $\varphi\in C_{0}^{\infty}(\Omega)$ as in the proof of Lemma \ref{lemma2}, $t>0$, and estimating as in \eqref{eq_est_abv_phi}, we see that    
$I_{a}(t\varphi) $ is bounded from above, uniformly if $a\in[0,a_1)$. Consequently, the mountain pass level is also estimated in the same way, that is, 
$$
0<\alpha\leq d_a =I_a(u_a) \leq \max\{I_a(t\varphi); t\geq 0\} \leq  C_{a_1}.
$$
\end{proof}

The next Lemma establishes a very important estimate involving the Sobo\-lev norm of the solution $u_a$ for $a \in [0,a_1)$.
\begin{lemma}\label{lm_nHincompact}
There exist constants  $k_{a_1},K_{a_1}$, such that $0<k_{a_1}\leq\|u_a\|\leq K_{a_1}$ for all $a \in [0,a_1)$.
\end{lemma}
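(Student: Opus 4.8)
The plan is to prove the two inequalities separately, in both cases tracking carefully that every constant is independent of the particular $a\in[0,a_1)$. The starting data are provided by Lemma~\ref{lm_ua}: $u_a$ is the mountain pass critical point and its energy is trapped in $0<\alpha\le I_a(u_a)\le C_{a_1}$, where $\alpha$ is the level produced by Lemma~\ref{lemma1} and is the same for every $a\ge 0$. The upper bound will come from the large-$a$-uniform Ambrosetti--Rabinowitz inequality \eqref{ARCondition} together with the energy ceiling $C_{a_1}$, while the lower bound will come from the strict positivity of the level $\alpha$.

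First I would record the exact Euler identity for the critical point. Since $u_a\in D^{1,p}(\mathbb{R}^N)$ is itself an admissible test function in \eqref{eq_varminim}, testing with $v=u_a$ and then with $v=-u_a$ and using $(-F_a)^0(t,\pm t)=\mp f_a(t)t$ from \eqref{eq_Fa0} sandwiches the two inequalities into the equality
\[
\|u_a\|^p=\int_{\mathbb{R}^N}h(x)f_a(u_a)u_a\,dx.
\]
For the upper bound I would then divide \eqref{ARCondition} by $\theta$, multiply by $h\ge0$ and integrate, obtaining $\int_{\mathbb{R}^N} h F_a(u_a)\,dx\le \tfrac1\theta\|u_a\|^p+\tfrac{T}{\theta}\|h\|_1$ after inserting the identity above. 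Substituting this into $I_a(u_a)=\tfrac1p\|u_a\|^p-\int_{\mathbb{R}^N} h F_a(u_a)\,dx\le C_{a_1}$ gives $\big(\tfrac1p-\tfrac1\theta\big)\|u_a\|^p\le C_{a_1}+\tfrac{T}{\theta}\|h\|_1$, and since $\theta>p$ this yields the uniform bound $\|u_a\|\le K_{a_1}$.

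For the lower bound I would not use the identity at all, but only the positivity of the level together with the one-sided estimate $F_a(t)\ge -a\,t^+$ from \eqref{eq_Fa_est}. Because $u_a\ge0$ by Lemma~\ref{lm_prop_minim}, this gives $\int_{\mathbb{R}^N} h F_a(u_a)\,dx\ge -a\int_{\mathbb{R}^N} h u_a\,dx\ge -a_1 C\|u_a\|$, where the final step is the continuous embedding of Lemma~\ref{l1} with $q=1$. Hence
\[
\alpha\le I_a(u_a)=\frac1p\|u_a\|^p-\int_{\mathbb{R}^N} h F_a(u_a)\,dx\le \frac1p\|u_a\|^p+a_1 C\|u_a\|.
\]
The right-hand side is a strictly increasing function of $\|u_a\|$ vanishing at the origin, so this inequality forces $\|u_a\|\ge k_{a_1}$ for some $k_{a_1}>0$ determined only by $\alpha$, $a_1$ and the embedding constant, none of which depend on $a$.

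The step I expect to be the genuine obstacle is the lower bound, precisely because $F_a$ is not sign-definite: the semipositone shift $-at$ makes $F_a$ negative near the origin, so one cannot simply discard $\int_{\mathbb{R}^N} h F_a(u_a)$ as in the coercive/positone setting. The key point is that this ``bad'' contribution is controlled \emph{linearly}, through $-a\,t^+$, which the $L^1_h$-embedding converts into a term of order $\|u_a\|$; being of lower order than $\|u_a\|^p$ near $0$, it cannot defeat the strictly positive level $\alpha$. The remaining care is purely bookkeeping: verifying that $\alpha$, $T$, $\|h\|_1$ and the embedding constant are all independent of $a$, which is exactly guaranteed by the uniform statements in Lemmas~\ref{lemma1} and \ref{lm_ua} and by \eqref{ARCondition}.
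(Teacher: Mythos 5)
Your proof is correct and follows essentially the same route as the paper's: the upper bound comes from the Euler identity $\|u_a\|^p=\int_{\mathbb{R}^N}h\,f_a(u_a)u_a\,dx$ combined with the uniform Ambrosetti--Rabinowitz inequality \eqref{ARCondition} and the energy ceiling $C_{a_1}$, and the lower bound from the strictly positive level $\alpha$, the estimate $F_a(t)\geq -a t^+$ in \eqref{eq_Fa_est}, and the $L^1_h$-embedding of Lemma \ref{l1}. Your bookkeeping is in fact slightly cleaner than the paper's (which writes $\|h\|_\infty T$ where $\tfrac{T}{\theta}\|h\|_1$ is the natural constant, and omits the weight $h$ in the intermediate integral $a\int u_a^+\,dx$), but these are typographical issues, not differences of method.
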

\begin{proof}
Using again that  $(-F_a)^0(t,\pm t)= \mp f_a(t)t$, we get from \eqref{eq_varminim} that 
\begin{equation}\label{eq_Ipuu}
 \|u_a\|^{p}-\int_{\R^N}h(x) f_{a}(u_a)u_a=0\,.
 \end{equation}
By Lemma \ref{lm_ua}, and subtracting \eqref{eq_Ipuu} divided by $\theta$,
we get the inequality below
$$
	C_{a_1} \geq I_a(u_a)  =  \rob{ \frac 1p-\frac1\theta}  \|u_a\|^{p} + \int_{\R^N}h(x) \left(\frac{1}{\theta}f_{a}(u_a)u_a -F_a(u_a)\right)dx,
$$
which combined with \eqref{ARCondition} leads to 
$$
C_{a_1}\geq\rob{ \frac 1p-\frac1\theta}  \|u_a\|^{p} -\n{h}_\infty T\,,
$$
establishing the estimate from above.

In order to get the estimate from below, just note that by \eqref{eq_Fa_est} and the   embeddings in Lemma \ref{l1},
$$
\alpha\leq I_a(u_a)\leq 
\frac1p\n{u_a}^p+a\int_{\R^N} u_a^+\,dx \leq \frac1p\n{u_a}^p+Ca_1\n{u_a}.
$$
This gives the desired estimate from below.  
\end{proof}

\subsection{Gobal minimum geometry}\label{sec_min}
Throughout this subsection, we assume the hypotheses of Theorem \ref{Theorem2}. 
The next three Lemmas will  prove that $I_a$ has a global minimum at a negative level.
  
\begin{lemma}\label{lemma1m}
There exist $a_1,\alpha>0$ and  $u_0\in D^{1,p}(\mathbb{R}^N)$  
such that 
$$I_a(u_0)\leq-\alpha, \qquad\text{ for  any $a\in[0,a_1)$.}$$
\end{lemma}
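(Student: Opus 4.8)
The plan is to exploit the fast growth of $F$ near the origin encoded in \eqref{Hp_forig_up}, which forces the potential term to overcome the Dirichlet term $\frac1p\n{\cdot}^p$ along a suitably small nonnegative test function, thereby driving the energy negative. The starting observation is that for any nonnegative $u_0\in D^{1,p}(\R^N)$ one has, directly from \eqref{eq_Fa}, the identity $I_a(u_0)=I_0(u_0)+a\int_{\R^N}h(x)u_0\,dx$. Hence it suffices to produce a single nonnegative $u_0$ with $I_0(u_0)$ strictly negative, say $I_0(u_0)\leq-2\alpha<0$; the positive perturbation $a\int_{\R^N}h u_0\,dx$ will then be absorbed by taking $a_1$ small.

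To construct such a $u_0$, I would fix $\varphi\in C_0^\infty(\R^N)\setminus\{0\}$ with $\varphi\geq0$ and test with $u_0=t\varphi$ for small $t>0$. By \eqref{Hp_forig_up}, for every $M>0$ there is $\delta>0$ with $F(s)\geq M s^p$ for $0\leq s\leq\delta$. Since $\varphi$ is bounded and compactly supported, $t\varphi(x)\leq t\n{\varphi}_\infty\leq\delta$ uniformly in $x$ once $t$ is small, which yields the pointwise bound $F(t\varphi)\geq M t^p\varphi^p$ and hence $\int_{\R^N}h\,F(t\varphi)\,dx\geq M t^p\int_{\R^N}h\varphi^p\,dx$. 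Choosing $M$ large enough that $\frac1p\n{\varphi}^p - M\int_{\R^N}h\varphi^p\,dx<0$ (possible since $h>0$ forces $\int_{\R^N}h\varphi^p\,dx>0$), I obtain $I_0(t\varphi)\leq\big(\frac1p\n{\varphi}^p - M\int_{\R^N}h\varphi^p\,dx\big)t^p<0$ for all such small $t$. Fixing one admissible value $t_*$ and setting $u_0=t_*\varphi$ gives $I_0(u_0)\leq-2\alpha$ for a suitable $\alpha>0$.

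Finally, using the identity above, $I_a(u_0)\leq-2\alpha+a\,t_*\int_{\R^N}h\varphi\,dx$, so taking $a_1>0$ small enough that $a_1 t_*\int_{\R^N}h\varphi\,dx\leq\alpha$ yields $I_a(u_0)\leq-\alpha$ for all $a\in[0,a_1)$, as required. Note that only \eqref{Hp_forig_up} (together with $h>0$) is used here; the growth conditions \eqref{Hp_estSC} and \eqref{Hp_PS_sQ} play no role at this point, as they will instead guarantee the coercivity of $I_a$ when the global minimum is actually produced.

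The main obstacle I anticipate is precisely the sign of the $a$-dependent term: it is positive and linear in $t$, so it dominates the favourable $-t^p$ contribution as $t\to0^+$ (because $p>1$). One therefore cannot simply send $t\to0$ with $a$ fixed; the correct order of quantifiers is to \emph{first} freeze $t=t_*$ at the level $a=0$ to secure strict negativity with a margin, and only \emph{afterwards} shrink $a_1$ to swallow the perturbation. The other delicate point is the uniform-in-$x$ smallness of $t\varphi$, which is what legitimizes applying the local bound from \eqref{Hp_forig_up} simultaneously at every point of $\mathrm{supp}\,\varphi$.
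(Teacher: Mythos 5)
Your proof is correct and follows essentially the same route as the paper: both test with $t\varphi$ for a fixed nonnegative $\varphi\in C_0^\infty(\mathbb{R}^N)$, use \eqref{Hp_forig_up} (together with the positivity of $h$ on $\mathrm{supp}\,\varphi$ and the uniform smallness of $t\varphi$) to make the potential term dominate $\frac1p t^p$ at a fixed small $t_*$, and only then shrink $a_1$ so the linear term $a t_*\int h\varphi\,dx$ is absorbed. Your closing remark about the order of quantifiers is exactly the point the paper's proof also respects.
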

\begin{proof}
Let $\varphi\in C_{0}^{\infty}(\mathbb{R}^N)$ be as in the proof of Lemma \ref{lemma2}. For $t>0$, 
\begin{eqnarray*}
	I_{a}(t\varphi)	&=&\frac1p t^p - \int_{\Omega}h(x)F(t\varphi)\,dx + a\int_{\Omega}h(x)t\varphi\, dx\,,
\end{eqnarray*}
where $\Omega=supp \,\varphi$.

From \eqref{Hp_forig_up} and using the fact that $\displaystyle \inf_{x \in supp \,\varphi }h(x)=h_0>0$,  we have that, for $t_0>0$ small enough 
$$  \quad\int_{\Omega}h(x)F(t_0\varphi)\,dx\geq  \frac2pt_0^{p}\,.$$ 
Therefore, 
$$
I_a(t_0\varphi)\leq -\frac{1}p t_0^{p}+at_0\int_{\Omega}h(x) \varphi\,dx.
$$
Now fixing $\alpha=\frac{1}{2p}t_0^p>0$ and  choosing  $a_1=a_1(t_0)$ in such way that \\$a_1t_0\int_{\Omega} h(x)\varphi\,dx\leq\alpha$, we derive that
$$
I_a(t_0\varphi)\leq -\alpha<0 \qquad \text{for $a\in[0,a_1)$,}
$$
showing the Lemma. 
\end{proof}

\begin{lemma}\label{lemma2m}
 	$I_a$ is coercive, uniformly with respect to $a\geq0$, in fact, there exist $H,\rho>0$ independent of $a$ such that $I_a(u)\geq H$ whenever $\n u\geq \rho$.
\end{lemma}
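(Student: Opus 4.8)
The plan is to bound $I_a$ from below by a function of $\n u$ alone, independent of $a$, which diverges as $\n u\to\infty$; coercivity uniform in $a$ then follows immediately. First I would eliminate the dependence on $a$ by means of \eqref{eq_Fa_est}, which gives $F_a(t)\leq F(t^+)$ for every $t\in\R$, so that
$$\int_{\R^N}h(x)F_a(u)\,dx\leq \int_{\R^N}h(x)F(u^+)\,dx,$$
and the right-hand side no longer involves $a$.

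Next, since \eqref{Hp_PS_sQ} forces $q<p$ in \eqref{Hp_estSC}, the subcritical growth of $f$ together with $f(0)=0$, the continuity of $f$, and $q>1$ yields a global bound $F(t)\leq C(1+t^q)$ for all $t\geq0$ (integrate $f(t)\leq C(1+t^{q-1})$). Hence, invoking the continuous embedding $D^{1,p}(\R^N)\hookrightarrow L^q_h(\R^N)$ from Lemma \ref{l1} and the fact that $h\in L^1(\R^N)$ by \eqref{Hp_P2_L1Li}, I obtain
$$\int_{\R^N}h(x)F(u^+)\,dx\leq C\n h_1+C\int_{\R^N}h(x)|u|^q\,dx\leq C_1+C_2\n u^q,$$
where $C_1,C_2$ are independent of $a$.

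Combining the two estimates gives $I_a(u)\geq \frac1p\n u^p-C_2\n u^q-C_1$ for all $u\in D^{1,p}(\R^N)$ and all $a\geq0$. Because $q<p$, the scalar function $s\mapsto \frac1p s^p-C_2 s^q-C_1$ is bounded below and tends to $+\infty$ as $s\to+\infty$; indeed $\frac1p s^p-C_2 s^q=s^q(\frac1p s^{p-q}-C_2)$ is eventually positive and increasing. I would therefore fix $\rho>0$ large enough that this function exceeds some $H>0$ whenever $s\geq\rho$, which yields $I_a(u)\geq H$ for all $u$ with $\n u\geq\rho$, uniformly in $a$.

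The only point demanding attention is the uniformity in $a$, and this is secured precisely because the bound $F_a\leq F$ used in the first step is $a$-free; so there is no genuine obstacle here. In particular, the condition \eqref{Hp_forig_up}, which only constrains the behavior of $F$ near the origin, plays no role in this large-$\n u$ estimate and is not needed for the present Lemma.
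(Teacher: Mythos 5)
Your proof is correct and follows essentially the same route as the paper: the paper likewise drops the $a$-dependence via $F_a(t)\leq F(t^+)$, uses the growth bound from \eqref{Hp_estSC} together with Lemma \ref{l1} and $h\in L^1(\R^N)$ to get $I_a(u)\geq \frac1p\n{u}^p-C\n{u}^q-C$, and concludes from $q<p$. Your write-up merely makes explicit the intermediate steps (and correctly observes that \eqref{Hp_forig_up} is not needed), so there is nothing to fix.
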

\begin{proof} 
By \eqref{Hp_estSC} and Lemma \ref{l1}, there is $C>0$ such that 
\begin{eqnarray}\label{eq_coerc}
I_a(u)&\geq &\frac 1p\n{u}^{p } - \int_{\R^N}h(x)\rob{C+C|u|^q}\,dx
\\\nonumber&\geq &\frac 1p\n{u}^{p }-C-C\n{u}^q,
\end{eqnarray}
then the claim follows easily since $p>q$ from \eqref{Hp_PS_sQ}.
\end{proof}

\begin{lemma}
For every $a\in\R$, $I_a$ is weakly lower semicontinuos.
\end{lemma}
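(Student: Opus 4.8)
The plan is to split the functional as $I_a(u)=\frac1p\n u^p-\int_{\R^N}h(x)F_a(u)\,dx$ and treat the two summands separately: the first, being a power of a norm, is weakly lower semicontinuous by general principles, while the second I will show to be \emph{weakly continuous}, so that its negative is automatically weakly lower semicontinuous. Once both facts are in hand, the conclusion follows from the elementary inequality $\liminf(a_n+b_n)\geq \lim a_n+\liminf b_n$ valid whenever $\lim a_n$ exists: taking $u_n\rightharpoonup u$ in $D^{1,p}(\R^N)$ and letting $a_n$ be the (convergent) nonlinear part and $b_n$ the gradient part, I would obtain
\begin{equation*}
\liminf_n I_a(u_n)\geq \frac1p\n u^p -\int_{\R^N}h(x)F_a(u)\,dx=I_a(u)\,.
\end{equation*}

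For the gradient term, note that $\frac1p\n\cdot^p$ is the composition of the continuous nondecreasing map $t\mapsto t^p/p$ with the norm $\n\cdot$; since every norm is convex and continuous, hence weakly lower semicontinuous, the same holds for $\frac1p\n\cdot^p$. Concretely, $u_n\rightharpoonup u$ yields $\liminf_n\n{u_n}\geq\n u$ and therefore $\liminf_n\frac1p\n{u_n}^p\geq \frac1p\n u^p$.

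The heart of the matter is the weak continuity of $u\mapsto \int_{\R^N}h(x)F_a(u)\,dx$, and here the compact embedding of Lemma \ref{l1} is essential. Since $u_n\rightharpoonup u$ in $D^{1,p}(\R^N)$ and the embedding $D^{1,p}(\R^N)\hookrightarrow L^q_h(\R^N)$ is compact for the exponent $q\in(1,p^*)$ of \eqref{Hp_estSC} (recall that $q<p<p^*$ under the hypotheses of Theorem \ref{Theorem2}, so this compact embedding is available), the whole sequence converges strongly, $u_n\to u$ in $L^q_h(\R^N)$. On the other hand, from the explicit expression \eqref{eq_Fa} of $F_a$ together with the subcritical bound $F(t)\leq C(1+t^q)$ for $t\geq0$ (a consequence of \eqref{Hp_estSC} and the continuity of $f$), one gets $|F_a(t)|\leq C(1+|t|^q)$ for all $t\in\R$, with $C$ depending on the fixed value of $a$. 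Writing $\mu=h\,dx$, which is a finite measure because $h\in L^1(\R^N)$ by \eqref{Hp_P2_L1Li}, this growth makes the Nemytskii operator $w\mapsto F_a(w)$ continuous from $L^q(\mu)$ into $L^1(\mu)$; hence $F_a(u_n)\to F_a(u)$ in $L^1(\mu)$ and in particular
\begin{equation*}
\int_{\R^N}h(x)F_a(u_n)\,dx\to\int_{\R^N}h(x)F_a(u)\,dx\,.
\end{equation*}
Alternatively, one may extract an a.e.\ convergent subsequence and conclude by the Vitali convergence theorem, the family $\{F_a(u_n)\}$ being uniformly $\mu$-integrable because $\{|u_n|^q\}$ is (it converges in $L^1(\mu)$); the usual subsequence argument then upgrades the conclusion to the full sequence.

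I expect the only genuine obstacle to be this last step, the passage to the limit in the nonlinear term. It is precisely the place where weak convergence alone is insufficient and where the compactness in Lemma \ref{l1} --- ultimately furnished by the integrability of $h$ in \eqref{Hp_P2_L1Li} --- must be used; everything else is soft functional analysis. One should only take care that the condition $q<p$ from \eqref{Hp_PS_sQ} secures $q<p^*$, so that the relevant compact embedding indeed applies.
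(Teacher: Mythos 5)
Your proof is correct and takes essentially the same route as the paper's: split $I_a$ into the norm term (weakly lower semicontinuous by convexity) plus the nonlinear term, which is shown to be weakly continuous using the compact embedding of Lemma \ref{l1} together with the growth bound $|F_a(t)|\leq C(1+|t|^q)$ coming from \eqref{Hp_estSC} and \eqref{eq_Fa}. The only cosmetic difference is that the paper performs the final limit by hand---extracting an a.e.\ convergent, dominated subsequence and applying dominated convergence plus the subsequence principle---which is exactly the proof of the Nemytskii-continuity/Vitali facts you invoke.
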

\begin{proof}
The proof is classical, since the norm is weakly lower semicontinuos and
the term $\int_{\R}h(x)F_a(u)\,dx$ is weakly continuous. To see this, let $\{u_n\}$ be a sequence in $D^{1,p}(\mathbb{R}^N)$ such that 
$$
u_n\rightharpoonup u ~~ \mbox{in} ~~ D^{1,p}(\mathbb{R}^N).
$$
Then, proceeding as in the proof of Lemma \ref{l1}, up to a subsequence 
$$
u_n(x) \to u(x) \quad \mbox{in $L^{q}_h(\R^N)$\ and\  a.e. in} \ \mathbb{R}^N.
$$
This means that $w_n=h^{1/q}u_n\to w= h^{1/q}u$ in $L^{q}$, as a consequence, we may also assume that $\{w_n\}$ is dominated by some $g\in L^{q}$. On the other hand, by \eqref{Hp_estSC} and \eqref{Hp_P2_L1Li}
$$
|h\,F_a(u_n)|\leq h\,C(|u_n|^q+1)\leq C(g^q+h)\in L^1(\mathbb{R}^N),
$$ 
and so,  $hF_a(u_n)$  is dominated and converges to its a.e. limit $hF_a(u)$. Since the same argument can be applied to any subsequence of the initial sequence, we can ensure that  $$
\lim_{n \to +\infty}\int_{\R^N} hF_a(u_n)\,dx=\int_{\R^N} hF_a(u)\,dx
$$ along any $D^{1,p}$-weakly convergent sequence. 
\end{proof}

We will now obtain a candidate solution for problem \eqref{Problem-PA} by minimization.
\begin{lemma}\label{lm_ua_m}
There exists a constant $C_{a_1}>0$	such that
$I_a$ has a global minimizer 
$u_a\in D^{1,p}(\R^N)$ satisfying  $0>-\alpha\geq I_a(u_a)\geq -C_{a_1}$, for every  $a\in[0,a_1)$.  
\end{lemma}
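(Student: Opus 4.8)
The plan is to apply the direct method of the calculus of variations, exploiting the three preceding results: the negative upper bound from Lemma~\ref{lemma1m}, the uniform coercivity from Lemma~\ref{lemma2m}, and the weak lower semicontinuity just established. First I would set
$$
m_a=\inf_{u\in D^{1,p}(\mathbb{R}^N)}I_a(u),
$$
and observe that Lemma~\ref{lemma1m} immediately gives $m_a\leq I_a(u_0)\leq-\alpha<0$, so the infimum is strictly negative (and in particular finite from above, uniformly for $a\in[0,a_1)$).

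Next I would check that $m_a$ is bounded from below, which also delivers the explicit bound $-C_{a_1}$. For this I revisit the coercivity estimate \eqref{eq_coerc}: since $q<p$ by \eqref{Hp_PS_sQ}, the scalar function $t\mapsto \tfrac1p t^{p}-C-Ct^{q}$ attains a finite minimum on $[0,\infty)$, and this minimum depends only on the constants appearing in \eqref{eq_coerc}, which are independent of $a$. Calling $-C_{a_1}$ this minimum value, one has $I_a(u)\geq-C_{a_1}$ for every $u\in D^{1,p}(\mathbb{R}^N)$, hence $m_a\geq-C_{a_1}$ for all $a\in[0,a_1)$.

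Then I would take a minimizing sequence $\{u_n\}$ with $I_a(u_n)\to m_a$. By the uniform coercivity of Lemma~\ref{lemma2m}, $\{u_n\}$ is bounded in $D^{1,p}(\mathbb{R}^N)$; since $1<p<\infty$ makes $D^{1,p}(\mathbb{R}^N)$ a reflexive (indeed uniformly convex) Banach space, up to a subsequence $u_n\rightharpoonup u_a$ weakly. The weak lower semicontinuity of $I_a$ then yields $I_a(u_a)\leq\liminf_n I_a(u_n)=m_a$, and since trivially $I_a(u_a)\geq m_a$, we conclude $I_a(u_a)=m_a$. Thus $u_a$ is a global minimizer and, combining the two bounds above, it satisfies $-C_{a_1}\leq I_a(u_a)=m_a\leq-\alpha<0$, as claimed.

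I expect no serious obstacle here, as the proof is essentially a routine assembly of the three ingredients already proved. The only mild point requiring attention is the \emph{uniformity in $a$} of the lower bound $-C_{a_1}$: this is guaranteed precisely because the constants in \eqref{eq_coerc} do not depend on $a$, so the same $C_{a_1}$ works throughout the interval $[0,a_1)$. I would also remark, as noted earlier in the text, that a global minimizer is automatically a critical point in the sense of \eqref{eq_varminim}, which is what makes this $u_a$ a genuine candidate solution of \eqref{Problem-PA}.
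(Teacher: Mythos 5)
Your proposal is correct and follows essentially the same route as the paper: the paper's (much terser) proof likewise invokes Lemma~\ref{lemma1m} for the upper bound $-\alpha$, the coercivity estimate \eqref{eq_coerc} for the lower bound $-C_{a_1}$, and the preceding coercivity and weak lower semicontinuity lemmas for existence of the minimizer via the direct method. You have merely written out the standard details (minimizing sequence, reflexivity, weak convergence) that the paper leaves implicit.
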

\begin{proof}
The minimizer is obtained in view of the above Lemmas. Actually the global minimum of $I_a$ stays below $-\alpha$  by Lemma \ref{lemma1m}, while the boundedness from below is a consequence of \eqref{eq_coerc}. 
\end{proof}

The next Lemma establishes the same important estimate as the one in Lemma \ref{lm_nHincompact}, for the minimizer $u_a$.
\begin{lemma}\label{lm_nHincompact_m}
There exist constants  $k_{a_1},K_{a_1}$, such that $0<k_{a_1}\leq\|u_a\|\leq K_{a_1}$ for all $a \in [0,a_1)$.
\end{lemma}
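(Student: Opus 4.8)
The plan is to establish the two bounds separately, adapting the argument of Lemma \ref{lm_nHincompact} to the present minimization setting. The crucial structural difference is that here the minimizer satisfies $I_a(u_a)<0$ (indeed $I_a(u_a)\leq-\alpha$ by Lemma \ref{lm_ua_m}), whereas in the mountain-pass case of Lemma \ref{lm_nHincompact} one had $I_a(u_a)\geq\alpha>0$; consequently the roles of the two defining terms of $I_a$ get interchanged and the lower bound requires a genuinely different argument.

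For the upper bound I would simply invoke the uniform coercivity already recorded in Lemma \ref{lemma2m}. That lemma produces constants $H,\rho>0$, independent of $a$, with $I_a(u)\geq H>0$ whenever $\n u\geq\rho$, while Lemma \ref{lm_ua_m} gives $I_a(u_a)\leq-\alpha<0$. Hence the minimizer cannot have norm $\geq\rho$, so $\n{u_a}<\rho=:K_{a_1}$ for every $a\in[0,a_1)$. Equivalently, one may insert $I_a(u_a)\leq0$ into the estimate \eqref{eq_coerc} and solve the resulting inequality $\tfrac1p\n{u_a}^p\leq C+C\n{u_a}^q$, using $q<p$ from \eqref{Hp_PS_sQ}.

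The lower bound is the point that needs care. Writing $I_a(u_a)=\tfrac1p\n{u_a}^p-\int_{\R^N}h(x)F_a(u_a)\,dx\leq-\alpha$, I would first deduce that the nonlinear term is bounded below by a positive constant, namely $\int_{\R^N}h(x)F_a(u_a)\,dx\geq\tfrac1p\n{u_a}^p+\alpha\geq\alpha>0$. I would then bound the same term from above in terms of $\n{u_a}$: by \eqref{eq_Fa_est} one has $F_a(u_a)\leq F(u_a^+)$, while the subcritical growth \eqref{Hp_estSC} together with the continuity of $f$ and $f(0)=0$ yields $f(t)\leq C(1+t^{q-1})$ and hence $F(t)\leq C(t+t^q)$ for $t\geq0$. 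Applying the continuous embeddings of Lemma \ref{l1} into $L^1_h(\R^N)$ and $L^q_h(\R^N)$ (both exponents lie in $[1,p^*)$ since $1<q<p<p^*$) then gives $\int_{\R^N}h(x)F_a(u_a)\,dx\leq C(\n{u_a}+\n{u_a}^q)$. Combining the two estimates produces $C(\n{u_a}+\n{u_a}^q)\geq\alpha>0$, which is impossible if $\n{u_a}$ is too small, and thus forces a uniform lower bound $\n{u_a}\geq k_{a_1}>0$.

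Finally I would verify that all the constants involved ($\alpha,\rho$ from the geometry lemmas and the embedding constants $C$) depend only on $a_1$, $h$ and $f$, and not on the individual $a\in[0,a_1)$, which is precisely the uniformity asserted. The main obstacle is the lower bound: since the minimum level is negative, one cannot reproduce the chain $\alpha\leq I_a(u_a)\leq\tfrac1p\n{u_a}^p+Ca_1\n{u_a}$ used in Lemma \ref{lm_nHincompact}. Instead, the positivity of $k_{a_1}$ must be extracted from the negativity of $I_a(u_a)$ by isolating the nonlinear term and controlling it through the subcritical growth and the compact embeddings of Lemma \ref{l1}.
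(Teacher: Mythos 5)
Your proposal is correct and follows essentially the same route as the paper: the upper bound from the uniform coercivity of Lemma \ref{lemma2m} combined with $I_a(u_a)\leq-\alpha<0$, and the lower bound by observing that $I_a(u_a)\leq-\alpha$ forces $\int_{\R^N}h(x)F(u_a^+)\,dx\geq\alpha>0$, which is incompatible with $\n{u_a}$ being small. Your explicit growth estimate $F(t)\leq C(t+t^q)$ together with the embeddings of Lemma \ref{l1} is simply the quantitative version of the paper's appeal to ``Lemma \ref{l1} and the continuity of the integral,'' so the two arguments coincide in substance.
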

\begin{proof} 
The bound from above  for the norm of $u_a$ is a consequence of the uniform coercivity proved in Lemma \ref{lemma2m}, since $I_a(u_0)<0$. For the estimate from below, just note that by \eqref{eq_Fa_est},   
$$
0>-\alpha\geq I_a(u_a)=
\frac1p\n u^p-\int_{\R^N} h(x)F_a(u_a)\,dx  
$$
$$
\geq-\int_{\R^N} h(x)F( u_a^+)\,dx 
$$
and the right hand side goes to zero if $\n{u_a}$ goes to zero, by Lemma \ref{l1} and the continuity of the integral.
\end{proof}

\section{Further estimates for the critical points  $u_a$}\label{sec_estim}
From now on $u_a$ will be the critical point obtained  in Lemma \ref{lm_ua}  or in Lemma \ref{lm_ua_m}. Our first result ensures that the family $\{u_a\}_{a\in[0,\overline a)}$ is a bounded set in $L^{\infty}(\R^N)$ for $\overline a$ small enough. This fact is crucial in our approach.
\begin{lemma} \label{Estimativa} 
There exists $C_{a_1}^\infty>0$  such that
\begin{equation}\label{eq_estCinfty}
   \|u_a\|_{\infty} \leq C_{a_1}^\infty, \quad \forall a \in [0,a_1).
\end{equation}
\end{lemma}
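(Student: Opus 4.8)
The plan is to establish the uniform $L^\infty$ bound by a Moser iteration, exploiting that $u_a$ is a \emph{nonnegative subsolution} of the limiting equation together with the uniform control of its $D^{1,p}$-norm. By Lemma \ref{lm_prop_minim} we know that $u_a\geq0$ and that $u_a$ is a weak subsolution of $-\Delta_p u=h(x)f(u)$; by \eqref{Hp_estSC} there is $C>0$ (independent of $a$) with $f(t)\leq C(1+t^{q-1})$ for $t\geq0$, so that $h f(u_a)\leq Ch(1+u_a^{q-1})$. Moreover Lemmas \ref{lm_nHincompact} and \ref{lm_nHincompact_m} give $\n{u_a}\leq K_{a_1}$, whence $\|u_a\|_{p^*}\leq S_{N,p}K_{a_1}$ uniformly for $a\in[0,a_1)$. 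The governing principle is that every constant produced below depends only on $N,p,q$, on $\|h\|_1,\|h\|_\infty$, on the growth constant $C$ and on $K_{a_1}$, but never on $a$ itself; hence a bound of the form $\|u_a\|_\infty\leq\Psi(\|u_a\|_{p^*})$ will automatically be uniform.

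First I would set up the iteration. For $\beta\geq1$ and a truncation level $L>0$, I test the subsolution inequality (item (3.) of Lemma \ref{lm_prop_minim}) with the admissible nonnegative function $\phi=u_a\min\cub{u_a,L}^{p(\beta-1)}\in D^{1,p}(\R^N)$, and then let $L\to\infty$ by monotone convergence. Writing $w=u_a^{\beta}$, so that $\nabla w=\beta u_a^{\beta-1}\nabla u_a$, the gradient term reproduces $\int|\nabla w|^p$ up to the factor $\beta^p/(p\beta-p+1)\leq\beta^{p-1}$ (using $p\beta-p+1\geq\beta$ for $\beta\geq1$), while the Gagliardo--Nirenberg--Sobolev inequality \eqref{IM} bounds $\|w\|_{p^*}^p=\|u_a\|_{\beta p^*}^{p\beta}$ from above. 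Since $u_a^{p\beta-p+1}\leq1+u_a^{p\beta-p+q}$ and $\int_{\R^N}h\,dx=\|h\|_1<\infty$, this produces a reverse-type inequality
$$\|u_a\|_{\beta p^*}^{p\beta}\leq C\beta^{p-1}\rob{\|h\|_1+\int_{\R^N}h\,u_a^{p\beta-p+q}\,dx}.$$

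The next step is to close the iteration by estimating the last integral, and here the strict subcriticality $q<p^*$ is essential, with the two theorems requiring slightly different bookkeeping according to the sign of $q-p$. When $q<p$ (Theorem \ref{Theorem2}) the exponent $p\beta-p+q$ lies below $p\beta$, so $u_a^{p\beta-p+q}\leq1+u_a^{p\beta}$ and, by H\"older with $h\in L^\tau$ for a fixed $\tau'\in(1,p^*/p)$, the integral is controlled by $\|u_a\|_{p\beta\tau'}^{p\beta}$, whose exponent $p\beta\tau'$ sits strictly below $\beta p^*$, giving a genuinely contracting recursion. When $q>p$ (Theorem \ref{Theorem1}) the exponent lies strictly between $p\beta$ and $\beta p^*$ (since $q<p^*$ forces $p\beta-p+q<\beta p^*$ for $\beta\geq1$), so I interpolate $L^{p\beta-p+q}$ between $L^{p\beta}$ and $L^{\beta p^*}$; the piece landing at the top level $L^{\beta p^*}$ carries the exponent $E\theta$ with $\theta=(q-p)/(\beta(p^*-p))$, which stays bounded as $\beta\to\infty$ (it tends to $p(q-p)/(p^*-p)$), so for $\beta$ large it is strictly smaller than $p\beta$ and can be absorbed into the left-hand side, leaving a power of $\|u_a\|_{p\beta}$.

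Finally I would iterate along $s_{k+1}=\chi s_k$ with $\chi=p^*/p>1$, starting from $s_0=p^*$, where the base integrability is guaranteed by $\|u_a\|_{p^*}\leq S_{N,p}K_{a_1}$. The integrability gains accumulate geometrically while the product of the constants $(C\beta_k^{p-1})^{1/(p\beta_k)}$ converges, because $\beta_k\sim\chi^k$ forces $\sum_k\log(C\beta_k)/\beta_k<\infty$; passing to the limit $k\to\infty$ yields $\|u_a\|_\infty\leq\Psi(\|u_a\|_{p^*})$ for a continuous $\Psi$ built only from the data, and the uniform bound $C_{a_1}^\infty$ follows at once. The main obstacle is precisely this closing step: the correct H\"older/interpolation accounting for the nonlinear term, together with the absorption argument in the case $q>p$, where one must verify that the feedback exponent $E\theta$ is strictly below $p\beta$ along the sequence used. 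This is exactly where the strict subcriticality $q<p^*$ is consumed, and throughout one must check that no constant secretly depends on $a$.
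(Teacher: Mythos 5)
Your proposal is correct in substance but takes a genuinely different route from the paper. The paper does not run the iteration by hand: it rewrites the subsolution inequality from Lemma \ref{lm_prop_minim} as $-\Delta_p u_a \leq h(x)\alpha(x)\rob{1+|u_a|^{p-2}u_a}$ with $\alpha=f(u_a)/(1+u_a^{p-1})\leq D(1+u_a^{q-p})$, verifies that $\sup_{x,\rho}\rho^{-\delta}\n{h\alpha}_{N/p,K_\rho(x)}$ is bounded by a constant independent of $a$ (using $\tau=p^*/(q-p)>N/p$, $h\in L^\infty$, and the uniform bound on $\n{u_a}_{p^*}$ coming from Lemmas \ref{lm_nHincompact} and \ref{lm_nHincompact_m}), and then invokes the local boundedness theorem of Trudinger \cite[Theorem 5.1]{Trud67_HarnType} on cubes to get $\sup_{K_\rho(x)}u_a\leq C\rob{1+\n{u_a}_{p^*,K_{2\rho}(x)}}$ uniformly in $x$, hence the global bound; in the minimization case ($q<p$) the coefficient $\alpha$ is simply bounded. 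You instead prove the local boundedness estimate from scratch by a global Moser/Brezis--Kato iteration on $\R^N$, using exactly the same structural inputs (nonnegativity and the subsolution property, the growth \eqref{Hp_estSC}, $h\in L^1\cap L^\infty$, the uniform $D^{1,p}$ bound), which buys self-containedness and an explicit bound $\|u_a\|_\infty\leq\Psi(\|u_a\|_{p^*})$ at the cost of redoing work the paper outsources to a citation. Three points need tightening for your argument to be airtight: (i) in the case $q>p$, the absorption of the top-level norm must be performed on the truncated functions $w_L=u_a\min\{u_a,L\}^{\beta-1}$, whose $L^{p^*}$-norm is finite, \emph{before} letting $L\to\infty$; absorbing $\|u_a\|_{\beta p^*}^{E\theta}$ directly is not legitimate until that norm is known to be finite, which is precisely what the iteration is establishing; (ii) your formula $\theta=(q-p)/(\beta(p^*-p))$ mixes the linear-in-exponent and harmonic conventions for interpolation, but the claim you actually use --- that the exponent falling on $\|u_a\|_{\beta p^*}$ stays bounded as $\beta\to\infty$ and is therefore eventually strictly less than $p\beta$ --- is correct under either convention; (iii) since functions in $D^{1,p}(\R^N)$ need not belong to $L^p(\R^N)$, the iteration must never interpolate below the exponent $p^*$; your choice of base level $s_0=p^*$ respects this, but it should be stated explicitly, as should the fact that all constants depend only on $N,p,q,\|h\|_1,\|h\|_\infty$, the growth constant of $f$, and $K_{a_1}$, never on $a$ itself.
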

\begin{proof} 
By Lemma \ref{lm_prop_minim} we know that for $a\in[0,a_1)$,  $u_a\geq0$ and it is a weak subsolution of 
$$
-\Delta_p u=h(x)f(u), \quad \mbox{in} \quad \mathbb{R}^N.
$$
  
In the case of the mountain pass  geometry, $u_a$ is also 
a weak subsolution of 
$$
-\Delta_p u=h(x)\alpha(x)\rob{1+|u|^{p-2}u}\quad \mbox{in} \quad \mathbb{R}^N,
$$
where, from  \eqref{Hp_estSC}  and  \eqref{Hp_PS_SQ}, 
$$
\alpha(x):=\frac{f(u_a(x))}{1+u_a(x)^{p-1}}\leq D(1+u_a(x)^{q-p})\quad \mbox{in} \quad \mathbb{R}^N,
$$ 
for some $D>0$ which depends  only  on $f$.

Let $K_\rho(x)$ denote a cube centered at $x$ with edge length $\rho$, and $\n{\cdot}_{r,K}$ denote the $L^r$ norm restricted to the set $K$. Our goal is to prove that, for a fixed $\rho>0$ and any $x\in\R^N$, one  has 
\begin{equation}\label{eq_Tr1} 
   \sup_{K_\rho(x)}u_a\leq C \rob{1+\n{u_a}_{p^*, K_{2\rho}(x)}}
\end{equation} 
where $C$ depends on $p,N,f,h $ only.  
Since   $K_\rho(x)$ can be taken anywhere and the right hand side is bounded for $\cub{u_a}_{a\in[0,a_1)}$ by Lemma \ref{lm_nHincompact} and \eqref{IM}, equation \eqref{eq_Tr1} gives  a uniform bound for $u_a$ in $L^\infty$, proving  our claim.
       
In order to prove \eqref{eq_Tr1},   we will use   Trudinger \cite[Theorem 5.1]{Trud67_HarnType}   (see also Theorem 1.3 and Corollary 1.1). For this,  we need to show that  
$$
\sup_{x\in\R^N,\ \rho>0}\frac{\n {h\alpha} _{N/p,K_\rho(x)}}{\rho^\delta}\leq C
$$
for a suitable $\delta>0$  and $C$ that do not depend on $a\in[0,a_1)$ (see eq. (5.1) in \cite{Trud67_HarnType}).

Actually let $\tau =p^*/(q-p)>N/p$, then  
$$
  \n {h\alpha} _{N/p,K_\rho(x)} \leq\n{h\alpha}_{\tau,{K_\rho(x)}} |{K_\rho(x)}|^{p/N-1/\tau}\,,
$$ 
and 
\begin{multline}
\n {h\alpha} _{\tau,K_\rho(x)}^\tau=  \int_{K_\rho(x)} (h\alpha) ^\tau dx \leq\int_{K_\rho(x)}  h^\tau D(1+u_a^{q-p})^\tau\,dx\leq \\\leq D'\int_{K_\rho(x)}  h^\tau (1+u_a^{p^*})dx\leq D''(1+\n{u_a}_{p^*}^{p^*} )\,.
\end{multline}    
Using the fact that $|{K_\rho(x)}|= \rho^N$,  we conclude that    $ \rho^{-\delta}\n {h\alpha} _{N/p,K_\rho(x)}$ is bounded, for a suitable $\delta>0$, by a constant depending only on $p,N,f,h $ and $\n{u_a}_{p^*}$, which is bounded by Lemma \ref{lm_nHincompact} and \eqref{IM}. 

In the case of the minimum geometry,  we can take $\alpha$ to be a constant and then the boundedness of $ \rho^{-\delta}\n {h\alpha} _{N/p,K_\rho(x)}$ is easily obtained since $h\in L^\infty$ (in this case \eqref{eq_Tr1}  can also be obtained directly from Theorem 1.3 and Corollary 1.1  in \cite{Trud67_HarnType}).
\end{proof}
 
In what follows, we show an estimate from below of the norm $L^{\infty}(B_\gamma)$ of $u_a$ for $a$ small enough, where $B_\gamma \subset \mathbb{R}^N$ is the open ball centered at origin with radio $\gamma>0$. This estimate is a key point to understand the behavior of the family $\{u_a\}$ when $a$ goes to 0.

\begin{lemma}\label{lemma6} There exist $\delta,\gamma>0$ that do not depend on $a \in [0,a_1)$, such that $\|u_a\|_{\infty,{B_\gamma}}\geq \delta$  for all $a\in[0,{a}_1)$.
\end{lemma}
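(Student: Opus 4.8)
The plan is to combine the energy identity \eqref{eq_Ipuu}, the uniform $L^\infty$ bound of Lemma \ref{Estimativa}, the uniform lower bound $\n{u_a}\geq k_{a_1}$ from Lemmas \ref{lm_nHincompact} and \ref{lm_nHincompact_m}, and the fast decay of $h$ in \eqref{Hp_P4_Bbeta}. The guiding heuristic is that the weighted mass $\int_{\R^N}h(x)u_a\,dx$ stays bounded away from zero uniformly in $a$, while the decay of $h$ forbids this mass from escaping to infinity; hence a definite portion of it must concentrate inside a \emph{fixed} ball, and this is exactly what forces $\|u_a\|_{\infty,B_\gamma}$ to remain bounded below.

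First I would produce a uniform lower bound on the weighted mass. Since $u_a$ is a critical point in either geometry, the identity \eqref{eq_Ipuu}, whose derivation uses only \eqref{eq_varminim} and therefore applies verbatim to the minimizer of Lemma \ref{lm_ua_m}, gives $\n{u_a}^p=\int_{\R^N}h(x)f_a(u_a)u_a\,dx$. Dropping the nonpositive contribution $-au_a\leq0$ yields $f_a(u_a)u_a\leq f(u_a)u_a$, and since $0\leq u_a\leq C_{a_1}^\infty$ by Lemma \ref{Estimativa} with $f$ continuous, we have $f(u_a)\leq M$ for $M:=\max_{[0,C_{a_1}^\infty]}f$, a constant independent of $a$. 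Combined with $\n{u_a}\geq k_{a_1}$ this produces
$$
0<k_{a_1}^p\leq \n{u_a}^p\leq M\int_{\R^N}h(x)u_a\,dx,
$$
so that $\int_{\R^N}h(x)u_a\,dx\geq c_0:=k_{a_1}^p/M>0$ for every $a\in[0,a_1)$.

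Next I would localize this mass. Splitting the integral over $B_\gamma$ and its complement and using Lemma \ref{Estimativa} together with \eqref{Hp_P4_Bbeta}, the tail is controlled by
$$
\int_{\R^N\setminus B_\gamma}h(x)u_a\,dx\leq C_{a_1}^\infty\, B\int_{|x|>\gamma}|x|^{-\vartheta}\,dx,
$$
and because $\vartheta>N$ this tail integral is finite and tends to $0$ as $\gamma\to\infty$. I can therefore fix $\gamma>0$, independently of $a$, so large that the tail is at most $c_0/2$, whence $\int_{B_\gamma}h(x)u_a\,dx\geq c_0/2$. Finally, estimating $\int_{B_\gamma}h(x)u_a\,dx\leq \|u_a\|_{\infty,B_\gamma}\,\n{h}_1$ and invoking \eqref{Hp_P2_L1Li} gives $\|u_a\|_{\infty,B_\gamma}\geq \delta:=\dfrac{c_0}{2\n{h}_1}>0$, which is the claim.

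The argument is uniform in $a$ at every step, since $k_{a_1}$, $C_{a_1}^\infty$, $M$ and $\n{h}_1$ are all independent of the particular $a\in[0,a_1)$, so $\gamma$ and $\delta$ are as well, and the same computation covers both the mountain pass and the minimization settings. The only delicate point, and the real engine of the proof, is the uniform $L^\infty$ bound of Lemma \ref{Estimativa}: without it one could neither bound $f(u_a)$ nor control the tail, and the mass could in principle spike near a moving point or spread out. The hypothesis $\vartheta>N$ in \eqref{Hp_P4_Bbeta} is precisely what renders the weighted mass on the complement of a fixed ball negligible.
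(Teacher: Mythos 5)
Your proof is correct and follows essentially the same strategy as the paper's: both start from the bound $k_{a_1}^p \leq \|u_a\|^p \leq \int_{\R^N} h(x) f(u_a)u_a\,dx$ (you via \eqref{eq_Ipuu}, the paper via the subsolution inequality \eqref{eq_subsup}), invoke the uniform $L^\infty$ bound of Lemma \ref{Estimativa}, and split the integral into $B_\gamma$ and its complement, choosing $\gamma$ so that the tail contributes at most half. The only cosmetic differences are that you conclude directly through the weighted mass bound $\int_{\R^N} h u_a\,dx \geq c_0$ together with $\int_{B_\gamma} h u_a\,dx \leq \|u_a\|_{\infty,B_\gamma}\n{h}_1$, whereas the paper argues by contradiction using that $f(t)t$ is small for $t\in[0,\delta]$, and that you control the tail via the pointwise decay \eqref{Hp_P4_Bbeta}, where the integrability $h\in L^1(\R^N)$ from \eqref{Hp_P2_L1Li} already suffices.
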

\begin{proof}
By \eqref{eq_subsup}, since $u_a\geq0$,
\begin{equation}
\int_{\mathbb{R}^N}|\nabla u_a|^{p}\,dx\leq\int_{\mathbb{R}^N}h(x)f(u_a)u_a\,dx \,.
\end{equation} 
By Lemma \ref{lm_nHincompact} (resp. Lemma \ref{lm_nHincompact_m})  the left hand side is bounded from below by $k_{a_1}^p$.
Let now 
\\\indent $\bullet$  
$\Gamma$ be such that $f(t)t< \Gamma$ for $t\in [0,C_{a_1}^\infty]$, where $C_{a_1}^\infty$ was given in Lemma \ref{Estimativa},
\\\indent $\bullet$  
$\gamma$ be such that $\int_{\R^N\setminus B_\gamma}h\,dx<k_{a_1}^p/(2\Gamma)$,
\\\indent $\bullet$    
$\delta$ be such that $f(t)t< k_{a_1}^p/(2\n{h}_\infty|B_\gamma|)$ for $t\in [0,\delta]$.
\\
Then if $u_a<\delta$  in $B_\gamma$ we are lead to the  contradiction 
$$
k_{a_1}^p \leq \int_{\R^N}|\nabla u_a|^p \,dx \leq\int_{\R^N\setminus B_\gamma} h(x)f(u_a)\,u_a\,dx+\int_{B_\gamma} h(x)f(u_a)\,u_a\,dx<k_{a_1}^p
$$
and then the claim is proved.
\end{proof}
 
 We can now prove the following convergence result.
\begin{lemma}\label{lm_subtou}
Given  a sequence of positive numbers $a_j\to 0$, there exists  $u\in D^{1,p}(\R^N)$ and $\beta>0$ such that, up to a subsequence,  $u_{a_j}\to u$ weakly in $D^{1,p}(\R^N)$  and  in ${C}^{1,\beta}$ sense in compact sets. Moreover, $u>0$  is a solution of \eqref{Problem-P} with $a=0$.
\end{lemma}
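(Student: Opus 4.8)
The plan is to extract the desired convergences from the uniform estimates established in the previous Lemmas and then to identify the weak limit as a positive solution of the $a=0$ problem. First I would fix a sequence $a_j\to0$ (we may assume $a_j\in[0,a_1)$ for all $j$) and recall from Lemma \ref{lm_nHincompact} (or Lemma \ref{lm_nHincompact_m}) that $\|u_{a_j}\|\leq K_{a_1}$, so that $\{u_{a_j}\}$ is bounded in the reflexive space $D^{1,p}(\R^N)$. Hence, up to a subsequence, $u_{a_j}\rightharpoonup u$ weakly in $D^{1,p}(\R^N)$ for some $u\in D^{1,p}(\R^N)$, and by Lemma \ref{l1} we also have $u_{a_j}\to u$ in $L^q_h(\R^N)$ and a.e.\ in $\R^N$. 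Moreover, Lemma \ref{Estimativa} furnishes a uniform bound $\|u_{a_j}\|_\infty\leq C_{a_1}^\infty$, so the right-hand sides $h(x)f_{a_j}(u_{a_j})$ are uniformly bounded in $L^\infty_{loc}$ (in fact controlled by $\|h\|_\infty\sup_{[0,C_{a_1}^\infty]}(f+a_1)$).

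The next step is to upgrade the weak convergence to $C^{1,\beta}_{loc}$ convergence. By Lemma \ref{lm_prop_minim}, on any ball $B_R$ where we eventually know positivity (or directly using that the $u_{a_j}$ are subsolutions and supersolutions with uniformly bounded right-hand sides), each $u_{a_j}$ is a bounded weak solution of an equation $-\Delta_p u_{a_j}=g_j(x)$ with $\|g_j\|_{L^\infty(B_R)}$ bounded independently of $j$. The interior regularity theory for the $p$-Laplacian, namely the results of DiBenedetto and Tolksdorf (and Lieberman \cite{L1,L2}), then yields a uniform $C^{1,\beta}(B_{R/2})$ bound for some $\beta\in(0,1)$ not depending on $j$. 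By the compact embedding $C^{1,\beta}(\overline{B_{R/2}})\hookrightarrow C^{1}(\overline{B_{R/2}})$ and a diagonal argument over an exhausting sequence of balls, I extract a further subsequence converging in $C^1_{loc}$, and in fact in $C^{1,\beta'}_{loc}$ for any $\beta'<\beta$; the limit must coincide with the weak limit $u$.

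With strong $C^1_{loc}$ convergence in hand, I pass to the limit in the weak formulation. For a fixed test function $\phi\in C_0^\infty(\R^N)$ supported in some $B_R$, on the set where $u_{a_j}>0$ each $u_{a_j}$ solves the equation exactly (Lemma \ref{lm_prop_minim}(2)); more robustly, the subsolution/supersolution inequalities \eqref{eq_subsup} combined with $a_j\to0$ squeeze the limiting identity, giving $\int|\nabla u|^{p-2}\nabla u\nabla\phi=\int h(x)f(u)\phi$, so that $u$ solves $-\Delta_p u=h(x)f(u)$ weakly. It remains to prove $u>0$ rather than merely $u\geq0$. Here I invoke Lemma \ref{lemma6}: the uniform lower bound $\|u_{a_j}\|_{\infty,B_\gamma}\geq\delta$ survives the $C^0_{loc}$ convergence, forcing $\|u\|_{\infty,B_\gamma}\geq\delta>0$, so $u\not\equiv0$. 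Since $f(0)=0$ and $f\geq0$, the function $u$ is a nonnegative, nonzero weak solution of $-\Delta_p u=h(x)f(u)$; the strong maximum principle for the $p$-Laplacian (Vázquez) then upgrades this to $u>0$ everywhere in $\R^N$. Setting $\beta$ as the Hölder exponent above completes the proof.

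The main obstacle I expect is the passage to $C^{1,\beta}_{loc}$ convergence with constants uniform in $j$: one must verify that the equations satisfied by $u_{a_j}$ have right-hand sides bounded in $L^\infty_{loc}$ uniformly in $j$—which relies crucially on the uniform $L^\infty$ bound of Lemma \ref{Estimativa}—and then apply the $p$-Laplacian regularity theory carefully, since the relevant estimates are interior and the domain is unbounded, requiring the diagonal/exhaustion argument. The delicate point is that the $u_{a_j}$ need not be genuine solutions where they vanish, so the limiting equation must be obtained through the two-sided inequality \eqref{eq_subsup} together with $a_j\to0$, rather than by naive passage to the limit in an equality.
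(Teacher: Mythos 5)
Your proposal is correct and follows essentially the same route as the paper's proof: the uniform $L^\infty$ bound of Lemma \ref{Estimativa} is upgraded to local $C^{1,\alpha}$ bounds via Tolksdorf's regularity theory, a diagonal argument over an exhaustion by compact sets gives $C^{1,\beta}_{loc}$ convergence, the limit equation is obtained by squeezing the two-sided inequality \eqref{eq_subsup} as $a_j\to0$, nontriviality comes from Lemma \ref{lemma6}, and positivity from $f\geq 0$ together with the strong maximum principle. The only differences are presentational: you spell out the V\'azquez maximum principle and the weak $D^{1,p}(\R^N)$ convergence (which the paper's proof only states in $W^{1,p}_{loc}$ form), and you correctly flag the subtlety that the $u_{a_j}$ are only sub/supersolutions where they vanish, a point the paper glosses over when invoking Tolksdorf.
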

\begin{proof}
Fixing $u_j=u_{a_j}$, it follows that $\{u_j\}$ is bounded in $L^\infty(\mathbb{R}^N)$, which means that  we may apply \cite[Theorem 1]{Tolksdorf} to obtain that it is also bounded in ${C}_{loc}^{1,\alpha}(\R^N)$ for some $\alpha>0$. As a consequence, for $\beta\in(0,\alpha)$,
in any compact set $\overline\Omega$ it admits a  subsequence that converges in ${C}^{1,\beta}(\overline\Omega)$ and using a diagonal procedure we see that there exists $u\in {C}^{1,\beta}(\R^N)$ such that, again up to a subsequence,  $u_n\rightarrow u$ in ${C}^{1,\beta}$ sense in
compact sets. From Lemma \ref{lemma6}, $u$ is not identically zero. The boundedness in $W_{loc}^{1,p}(\mathbb{R}^N)$ implies that we may also assume  that $u_j\rightharpoonup u$  in $W_{loc}^{1,p}$ and  in $L_{loc}^{p^*}(\mathbb{R}^N)$.

For $\phi\geq 0$ with  support in some bounded set $\Omega$, from \eqref{eq_subsup}  we have 
\begin{equation}
\int_{\Omega}h(x)(f(u_j)-a_j)\phi\,dx\leq \int_{\Omega}|\nabla u_j|^{p-2}\nabla u_j \nabla \phi\,dx\leq\int_{\Omega}h(x)f(u_j)\phi\,dx \,;
\end{equation} 
 the above convergences bring 
\begin{equation}
 \int_{\Omega}|\nabla u|^{p-2}\nabla u \nabla \phi\,dx=\int_{\Omega}h(x)f(u)\phi\,dx \,,
\end{equation} 
then $u$ is a nontrivial solution of \eqref{Problem-P} with $a=0$, and since $f\geq0$, it follows that $u$ is everywhere positive.
\end{proof}

 \section{Proof of the main Theorems }\label{sec_prfmain}
In order to prove that $u_a>0$ for $a$ small enough,  we will first construct a subsolution that will be used for comparison. 

\begin{lemma}\label{lm_z}

Let $\vartheta>N>p$ be as in  
\eqref{Hp_P4_Bbeta}. 
Given $A,r>0$ there exists $H>0$  such that 
the problem 
\begin{equation}\label{eq_probz}
	\left \{
	\begin{array}{rclcl}
	-\Delta_p z & =  & A & \mbox{in} & B_r\,, \\
	-\Delta_p z & =  & -H |x|^{-\vartheta} & \mbox{in} &  \mathbb{R}^N\setminus B_r\,, \\
	\end{array}
	\right.
\end{equation}
has an explicit family of bounded radial and radially decreasing weak solutions, defined up to an additive constant. More precisely, if we take $H=A\frac {\vartheta-N}{N} r^{\vartheta}$ and  if we fix $\lim_{|x|\to\infty}z(x)=0$, then the solution is
\begin{equation}\label{eq_z} 
	z(x)=  \begin{cases}
	C-\rob{\frac A N}^{1/(p-1)}\frac{p-1}p|x|^{p/(p-1)}& \mbox{ for $|x|<r$}\,,\\
	\rob{\frac {A} {N}r^{\vartheta}}^{1/(p-1)}\frac{p-1}{\vartheta-p}|x|^{(p-\vartheta)/(p-1)} & \mbox{ for $|x|\geq r$}\,,\\
	\end{cases}
\end{equation}
where C is chosen so that the two formulas coincide for $|x|=r$.

\end{lemma}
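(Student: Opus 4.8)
The plan is to look for \emph{radial} solutions $z(x)=z(\rho)$ with $\rho=|x|$, for which the $p$-Laplacian reduces to the ordinary differential operator
\begin{equation*}
\Delta_p z = \rho^{1-N}\rob{\rho^{N-1}|z'|^{p-2}z'}'\,.
\end{equation*}
On each of the two regions the equation then becomes $\rob{\rho^{N-1}|z'|^{p-2}z'}'=-\rho^{N-1}g(\rho)$, with $g\equiv A$ for $\rho<r$ and $g(\rho)=-H\rho^{-\vartheta}$ for $\rho>r$, so it can be integrated explicitly. First I would solve these two first-order problems for the \emph{flux} $P(\rho):=\rho^{N-1}|z'|^{p-2}z'$: on the inner region, integrating $P'=-A\rho^{N-1}$ and choosing the integration constant so that $P(\rho)\to0$ as $\rho\to0$ (which keeps $z'$ bounded at the origin) gives $P(\rho)=-A\rho^{N}/N$; on the outer region, integrating $P'=H\rho^{N-1-\vartheta}$ and discarding the additive constant so that $P\to0$ at infinity, as required by a decaying solution and permitted since $\vartheta>N$, gives $P(\rho)=-H\rho^{N-\vartheta}/(\vartheta-N)$.

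The crucial step is to match these across $\rho=r$: continuity of the flux $P$ forces $-Ar^{N}/N=-Hr^{N-\vartheta}/(\vartheta-N)$, which is exactly $H=A\frac{\vartheta-N}{N}r^{\vartheta}$, the value claimed in the statement. Solving $P(\rho)=\rho^{N-1}|z'|^{p-2}z'$ for $z'$ (noting that $z'<0$ on both pieces, so $z$ is radially decreasing) and integrating once more recovers the two power laws $|x|^{p/(p-1)}$ and $|x|^{(p-\vartheta)/(p-1)}$ appearing in \eqref{eq_z}; the coefficient of the outer piece is pinned by the normalization $\lim_{|x|\to\infty}z=0$, whereas the additive constant $C$ in the inner piece is fixed by continuity of $z$ at $|x|=r$. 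Boundedness is then immediate: the outer piece decays to $0$ because its exponent $(p-\vartheta)/(p-1)$ is negative, while the inner piece is a bounded power on $B_r$. A direct computation also confirms that $z'(r^-)=z'(r^+)=-\rob{A/N}^{1/(p-1)}r^{1/(p-1)}$, so that in fact $z\in C^1(\R^N)$.

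Finally I would promote this piecewise construction to a genuine \emph{global} weak solution. Because $P$ is continuous at $\rho=r$, the vector field $|\nabla z|^{p-2}\nabla z$ has continuous normal component across the sphere $|x|=r$, so integrating by parts against any test function produces no singular surface term on $\{|x|=r\}$; hence the weak formulation holds on all of $\R^N$ in the stated piecewise sense. One checks as well that $z\in D^{1,p}(\R^N)$ and that the exterior datum $-H|x|^{-\vartheta}$ is locally integrable there, both again using $\vartheta>N$. The one point that requires genuine care is precisely this interface bookkeeping, namely verifying that the $C^1$-matching (equivalently, continuity of the flux $P$) rules out a distributional contribution concentrated on $\{|x|=r\}$; the substitutions of the two power laws into the radial operator are otherwise entirely routine.
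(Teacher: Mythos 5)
Your proposal is correct and follows essentially the same route as the paper: reduce to the radial ODE, solve explicitly on $B_r$ and on $\mathbb{R}^N\setminus B_r$, and match at $|x|=r$ --- the equality of the radial derivatives (equivalently, continuity of the flux $\rho^{N-1}|z'|^{p-2}z'$) is exactly what fixes $H=A\frac{\vartheta-N}{N}r^{\vartheta}$ and makes the piecewise function a global weak solution. The only differences are cosmetic: the paper verifies a power-law ansatz $\sigma|x|^{\lambda}$ by direct substitution, whereas you derive the same profiles by integrating the flux equation, and you make explicit the interface bookkeeping (absence of a surface term) and the $D^{1,p}$ membership that the paper leaves implicit.
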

 
\begin{proof}
For a radial function $u(x)=v(|x|)$ one has $$\Delta_p u =|v'|^{p-2}\sqb{(p-1)v''+\frac{ N-1}{\rho}v'}\,.$$

By substitution, one can see that a function in the form   $u(x)=v(|x|)=\sigma |x|^\lambda$ is a solution of  the equation $\Delta_p u=\varrho|x|^b $ provided
$$
\begin{cases}
\lambda=\frac{p+b}{p-1}\,,
\\ 
|\sigma|^{p-2}\sigma=\frac 1{(N+b)|\lambda|^{p-2}\lambda}\,\varrho\,.
\end{cases}
$$

In particular,
\begin{itemize}
\item if $b=0$  then $\lambda=\frac p{p-1}>0$ and $\sigma$ has the same sign of $\varrho$:
\item   if $b=-\vartheta$ with $\vartheta>N>p$  then $\lambda=\frac {p-\vartheta}{p-1}<0$ and still $\sigma$ has the same sign of $\varrho$.
\end{itemize}
 
Now, taking the two functions 
$$
	\begin{cases}
	-\rob{\frac A N}^{1/(p-1)}\frac{p-1}p|x|^{p/(p-1)}& \mbox{for $|x|<r$\,, }
	\\\rob{\frac {H} {\vartheta-N}}^{1/(p-1)}\frac{p-1}{\vartheta-p}|x|^{(p-\vartheta)/(p-1)}&  \mbox{for $|x|>r$\,, }
	\end{cases}  
$$
they satisfy \eqref{eq_probz} and their radial derivatives are 
$$
	\begin{cases}
	 -\rob{\frac A N}^{1/(p-1)}|x|^{1/(p-1)}& \mbox{for $|x|<r$\,, }
	 \\-\rob{\frac {H} {\vartheta-N}}^{1/(p-1)}|x|^{(1-\vartheta)/(p-1)}&  \mbox{for $|x|>r$. }
	\end{cases}  
$$
Note that the derivatives are equal at $|x|=r$ provided
$$
\frac A N r=\frac {H}{\vartheta-N} r^{1-\vartheta}\,.
$$

Having this in mind, we can therefore construct $z$ piecewise as in \eqref{eq_z} obtaining a bounded, radial and radially decreasing function, to which  any constant can be added.
\end{proof}
 
In order to finalize the proof of our main Theorems, we only  need to show that for any sequence of positive numbers $a_j\to0$ there exists a subsequence of the corresponding critical points  $u_j:=u_{a_j}$ that are positive: this is proved in the following Lemma.
\begin{lemma}\label{lm_ujpos}
If $h$ satisfies \eqref{Hp_P2_L1Li}$-$\eqref{Hp_P4_Bbeta}, then the sequence $\{u_j\}$ satisfies $u_j>0$ for $j$ large enough.
\end{lemma}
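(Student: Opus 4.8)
The plan is to work along the subsequence furnished by Lemma~\ref{lm_subtou}, for which $u_j\to u$ in $C^{1,\beta}$ on compact sets with $u>0$ everywhere a solution of \eqref{Problem-P} with $a=0$, and then to establish positivity of $u_j$ in two regions: on a fixed ball $B_r$ it will follow directly from this convergence, while on the complement $\mathbb{R}^N\setminus\overline{B_r}$ it will follow by comparison with the exterior barrier of Lemma~\ref{lm_z}. The semipositone difficulty is concentrated in the exterior, where $u_j$ may be small and the effective forcing $h(f(u_j)-a_j)$ is negative.

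First I would fix $r>0$ and set $m:=\min_{\overline{B_r}}u>0$, which is positive since $u$ is continuous and strictly positive. As the $C^{1,\beta}$-convergence on compacts implies $u_j\to u$ uniformly on $\overline{B_r}$, there is $j_0$ with $u_j\geq m/2>0$ on $\overline{B_r}$ for all $j\geq j_0$. This settles positivity inside $B_r$ and, decisively, provides the boundary lower bound $u_j\geq m/2$ on $\partial B_r$ that will drive the comparison.

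Next I would use the supersolution property of Lemma~\ref{lm_prop_minim}(4), namely the left inequality in \eqref{eq_subsup}. Since $f\geq0$ and, by \eqref{Hp_P4_Bbeta}, $h(x)<B|x|^{-\vartheta}$, testing \eqref{eq_subsup} with nonnegative $\phi$ supported in $\mathbb{R}^N\setminus\overline{B_r}$ gives
$$\int|\nabla u_j|^{p-2}\nabla u_j\cdot\nabla\phi\;\geq\;\int h(x)(f(u_j)-a_j)\phi\;\geq\;-a_j\int h\,\phi\;\geq\;-a_jB\int|x|^{-\vartheta}\phi,$$
so $u_j$ is a weak supersolution of $-\Delta_p w=-a_jB|x|^{-\vartheta}$ on the exterior domain. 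I then apply Lemma~\ref{lm_z} with $H=a_jB$ (equivalently $A=A_j:=a_jBN/[(\vartheta-N)r^\vartheta]$): its exterior piece $z_j$ from \eqref{eq_z} solves $-\Delta_p z_j=-a_jB|x|^{-\vartheta}$ for $|x|>r$, is positive, radially decreasing, and decays like $|x|^{(p-\vartheta)/(p-1)}$. Because $A_j\to0$, one has $\max z_j=z_j(0)\to0$, so for $j$ large $z_j\leq m/2\leq u_j$ on $\partial B_r$.

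Finally I would run the weak comparison principle on $\mathbb{R}^N\setminus\overline{B_r}$, using $(z_j-u_j)^+$ extended by zero on $B_r$ as test function. It has zero trace on $\partial B_r$ and belongs to $D^{1,p}$ precisely because $\vartheta>N>p$ forces $|\nabla z_j|^p\in L^1(\mathbb{R}^N\setminus B_r)$. Subtracting the two weak formulations, the right-hand sides cancel and the strict monotonicity of the $p$-Laplacian (Simon's inequality) forces $\nabla(z_j-u_j)^+\equiv0$, whence $(z_j-u_j)^+\equiv0$ and $u_j\geq z_j>0$ in the exterior. Combined with $u_j\geq m/2$ on $\overline{B_r}$, this yields $u_j>0$ on all of $\mathbb{R}^N$ for $j$ large, and Lemma~\ref{lm_prop_minim}(2) then upgrades such $u_j$ to a genuine solution of \eqref{Problem-P}. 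I expect the delicate point to be exactly this comparison on an unbounded domain: one must guarantee admissibility and $D^{1,p}$-membership of the test function and control the behaviour at infinity, which is where the hypothesis $\vartheta>N$ is indispensable, since it makes the barrier decay faster and be more integrable than the limiting profile $|x|^{(p-N)/(p-1)}$.
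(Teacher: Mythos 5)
Your argument is correct in substance and rests on the same pillars as the paper's proof --- the locally uniform convergence $u_j\to u>0$ from Lemma \ref{lm_subtou}, the barrier of Lemma \ref{lm_z}, the supersolution inequality \eqref{eq_subsup}, and weak comparison --- but it organizes the comparison genuinely differently. The paper uses one barrier $z$, fixed independently of $j$ and defined on all of $\mathbb{R}^N$: its interior forcing $A$ is matched from below by showing $h(x)f_{a_j}(u_j)\geq A$ on $\overline{B_r}$ (see \eqref{eq_hfA}, which requires $f(u)$ to stay away from $0$ there), its exterior forcing $-H|x|^{-\vartheta}$ dominates $-a_jh$ once $a_jB<H$, and the comparison is then run exclusively on \emph{bounded} balls: $u_j$ is compared with the vertically shifted barriers $z_R$, which vanish on $\partial B_R$, and $R\to\infty$ only at the end. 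This exhaustion trick never leaves bounded domains and yields as a bonus the $j$-independent lower bound $u_j\geq z$. You instead keep only the exterior piece of the barrier, let its amplitude $H_j=a_jB\to 0$ with $j$, match $u_j$ and $z_j$ through their boundary values on $\partial B_r$ (using $u_j\geq m/2$ there against $z_j|_{\partial B_r}\to 0$; incidentally it is this boundary value, not $z_j(0)$, that is relevant), and compare directly on the unbounded set $\mathbb{R}^N\setminus\overline{B_r}$ via the test function $(z_j-u_j)^+$. What this buys: you only ever use $f\geq 0$, never a positive lower bound for $hf_{a_j}(u_j)$ on $B_r$, so the interior structure of $f$ plays no role. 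What it costs: the two points you yourself flag as delicate must actually be carried out, namely (i) the weak formulation for $z_j$, which holds classically for $|x|>r$, must be extended to test functions in $D^{1,p}(\mathbb{R}^N)$ that vanish near $\partial B_r$ but are \emph{not} compactly supported --- this follows from a cutoff argument at infinity, which works because $\nabla z_j\in L^p(\mathbb{R}^N\setminus B_r)$, $|x|^{-\vartheta}\in L^{(p^*)'}(\mathbb{R}^N\setminus B_r)$, and $\|\phi\,\nabla \eta_R\|_p\leq C\|\phi\|_{L^{p^*}(B_{2R}\setminus B_R)}\to 0$ for the standard cutoffs $\eta_R$ --- and (ii) the step from $\nabla(z_j-u_j)^+\equiv 0$ to $(z_j-u_j)^+\equiv 0$, which holds because a function in $D^{1,p}(\mathbb{R}^N)$ with vanishing gradient is a constant lying in $L^{p^*}(\mathbb{R}^N)$, hence zero. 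Both points are routine, so I regard your proof as complete in essence; note that the paper's shifted-barrier device on bounded balls is precisely a way of never having to address them.
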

\begin{proof}
Fix $r>0$. From Lemma \ref{lm_subtou}, up to a subsequence,  $u_j\to u>0$ uniformly in $\overline{B_r}$. Thus, there exist $A,j_0>0$ such that, 
\begin{equation}\label{eq_hfA}
  \mbox{ $u_j>0$ \quad and \quad  $h(x)f_{a_j}(u_j)\geq A$,\quad  in $\overline{B_r}$,\quad for $j>j_0$. }
\end{equation}
Now let $B$ be the constant in condition  \eqref{Hp_P4_Bbeta},  $ H=A\frac {\vartheta-N}{N} r^{\vartheta}$ as from Lemma \ref{lm_z} and let $j_1>j_0$ be such that $a_jB<H$ for $j>j_1$. Then we have,
\begin{equation}\label{eq_hfB}
	\mbox{   $h(x)f_{a_j}(u_j)\geq-h(x)a_j\geq
	-H |x|^{-\vartheta}$,\ \ in $B_r^C$,\ \   for $j>j_1$. }
\end{equation}
Combining equation \eqref{eq_subsup}, the above inequalities and \eqref{eq_probz}, we get,  for every $\phi\in D^{1,p}(\R^N), \ \phi\geq0$, 
\begin{equation}\label{eq_compuz}
	\int_{\R^N} |\nabla u_j|^{p-2}\nabla u_j\nabla \phi\,dx \geq 
	\int_{\R^N} h(x)(f_{a_j}(u_j))\phi\,dx
	\geq
	\int_{\R^N} |\nabla z|^{p-2}\nabla z\nabla \phi\,dx\,.
\end{equation}

In order to conclude, fix an arbitrary $R>r$ and define  $z_R$ by subtracting from  $z$ a constant  so that $z_R=0$ on $\partial B_R$ (but observe that  $z_R>0$ in $ B_R$).
By \eqref{eq_compuz} (which holds true also for $z_R$) and since $u_j\geq 0=z_R$ on $\partial B_R$ we obtain by Comparison Principle that  $u_j\geq z_R>0$ in $B_R$.
Since $R$ is arbitrary, we have proved that $u_j>0$ in $\R^N$ for $j>j_1$.

In particular, since $z_R\to z$ uniformly as  $R\to \infty$, we conclude that   $u_j\geq z$ in $\R^N$ for $j>j_1$.

\end{proof}
 
\section{Final comments. }\label{sec_hopf}
In this section we would like to point out some results that can be useful when studying problems that involve the $p$-Laplacian operator in the whole $\mathbb{R}^N$. Proposition \ref{hopf} below works like a Hopf's Lemma for the $p$-Laplacian operator in the whole $\mathbb{R}^N$  and it allows to prove the Liouville-type result in Proposition \ref{prop_Liou}.

\begin{lemma}\label{lm_z_h}
Let $N>p$ and $A,r>0$. Then,  the problem 
\begin{equation}\label{eq_probz_h}
	\left \{
	\begin{array}{rclcl}
	-\Delta_p z & =  & A & \mbox{in} & B_r, \\
	-\Delta_p z & =  & 0 & \mbox{in} &  \mathbb{R}^N\setminus B_r, \\
	\end{array}
	\right.
\end{equation}
has an explicit family of bounded radial and radially decreasing weak  solutions, defined up to an additive constant. More precisely, if we fix $\displaystyle \lim_{|x|\to\infty}z(x)=0$, then the solution is
$$
    z(x)=  \begin{cases}
      C-\rob{\frac A N}^{1/(p-1)}\frac{p-1}p|x|^{p/(p-1)}& \mbox{ for $|x|<r$}\,,\\
   \rob{\frac A N}^{1/(p-1)}\frac{p-1}{N-1}r^{N/(p-1)}|x|^{(p-N)/(p-1) }& \mbox{ for $|x|\geq r$}\,,\\
      \end{cases}
$$ 
where C is chosen so that the two formulas coincide for $|x|=r$.
\end{lemma}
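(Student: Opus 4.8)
The plan is to verify directly that the piecewise-defined function $z$ solves the stated problem by reducing everything to the radial ODE computation already carried out in the proof of Lemma \ref{lm_z}. Specifically, for a radial function $u(x)=v(|x|)=\sigma|x|^\lambda$, the identity
\begin{equation*}
    \Delta_p u = |v'|^{p-2}\Big[(p-1)v''+\tfrac{N-1}{\rho}v'\Big]
\end{equation*}
shows that $\Delta_p u = \varrho|x|^b$ holds precisely when $\lambda=\tfrac{p+b}{p-1}$ and $|\sigma|^{p-2}\sigma=\tfrac{1}{(N+b)|\lambda|^{p-2}\lambda}\varrho$. First I would apply this with $b=0$ in the inner region $B_r$, which forces $\lambda=\tfrac{p}{p-1}$ and gives the coefficient $-\big(\tfrac{A}{N}\big)^{1/(p-1)}\tfrac{p-1}{p}$, reproducing the inner branch of $z$; the additive constant $C$ is free since $\Delta_p$ annihilates constants.

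For the outer region $\R^N\setminus B_r$ the right-hand side is now $0$ rather than $-H|x|^{-\vartheta}$, so the relevant solutions of $\Delta_p z=0$ are the $p$-harmonic radial functions. Next I would recall that $|x|^{(p-N)/(p-1)}$ is $p$-harmonic for $N>p$ (equivalently, take $\lambda=\tfrac{p-N}{p-1}<0$ in the $b=0$ computation with $\varrho=0$, which is the borderline homogeneity making the bracket vanish). The only remaining work is to fix the multiplicative constant by matching the radial derivatives at $|x|=r$: the inner derivative is $-\big(\tfrac{A}{N}\big)^{1/(p-1)}|x|^{1/(p-1)}$, evaluated at $r$ it equals $-\big(\tfrac{A}{N}\big)^{1/(p-1)}r^{1/(p-1)}$, and requiring the outer branch to have the same derivative at $r$ pins down the prefactor $\big(\tfrac{A}{N}\big)^{1/(p-1)}\tfrac{p-1}{N-1}r^{N/(p-1)}$. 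This is a one-line algebraic check analogous to the matching condition in Lemma \ref{lm_z}.

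Finally I would verify that the piecewise function is a genuine weak solution across the interface $|x|=r$, not merely a classical solution on each side. Since $z$ is continuous and its radial derivative is continuous at $r$ by construction (that was the whole point of the matching), no singular measure is produced on $\partial B_r$, so the weak formulation is satisfied globally; boundedness, radial symmetry and radial monotonicity are then immediate from the explicit formula, and the normalization $\lim_{|x|\to\infty}z(x)=0$ is automatic because the outer exponent $(p-N)/(p-1)$ is negative. The main obstacle, though a mild one, is precisely this gluing step: one must confirm that the $C^1$-matching of the two branches is enough to avoid a distributional contribution from the flux $|\nabla z|^{p-2}\nabla z$ across the sphere $|x|=r$, which follows because this vector field is continuous there. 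This is genuinely simpler than Lemma \ref{lm_z} since the outer equation is homogeneous, so I expect no essential difficulty beyond bookkeeping.
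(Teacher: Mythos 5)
Your strategy coincides with the paper's own proof: the inner branch comes from the $b=0$ power computation in Lemma \ref{lm_z}, the outer branch is a radial $p$-harmonic function $C_1|x|^{(p-N)/(p-1)}$, and $C_1$ is determined by matching radial derivatives at $|x|=r$, which, as you rightly say, is exactly the flux-continuity condition that makes the glued function a weak solution across $\partial B_r$. The gap is that the step you call ``a one-line algebraic check'' is asserted, not performed, and the value you assert is false. The outer branch has radial derivative $-C_1\frac{N-p}{p-1}|x|^{-(N-1)/(p-1)}$, so equating it at $|x|=r$ with the inner derivative $-\big(\tfrac{A}{N}\big)^{1/(p-1)}r^{1/(p-1)}$ forces
\begin{equation*}
C_1=\Big(\frac{A}{N}\Big)^{1/(p-1)}\frac{p-1}{N-p}\,r^{N/(p-1)},
\end{equation*}
with denominator $N-p$, not $N-1$. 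This is precisely what the paper computes in \eqref{eq_consthopf}, and it is the constant invoked later in Proposition \ref{hopf}; the factor $\frac{p-1}{N-1}$ in the Lemma's displayed formula is a typo in the statement, which your blind proof ``confirms'' instead of detecting.

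This is not a cosmetic slip, because it destroys the very gluing argument you describe: with the prefactor $\frac{p-1}{N-1}$ the two branches are continuous but not $C^1$ at $|x|=r$ (the one-sided derivatives differ by the factor $\frac{N-p}{N-1}\neq 1$, since $p>1$), so the vector field $|\nabla z|^{p-2}\nabla z$ jumps across the sphere and the glued function solves $-\Delta_p z=A\chi_{B_r}$ plus a nonzero singular measure supported on $\partial B_r$, rather than problem \eqref{eq_probz_h}. Once the constant is corrected to $\frac{p-1}{N-p}$, the rest of your argument (interior equations via the radial power formula, flux continuity, boundedness, radial monotonicity, decay at infinity) is sound and essentially identical to the paper's proof.
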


\begin{proof}
%
For $|x|>r$ we consider the family of p-harmonic functions $C_1|x|^{\frac{p-N}{p-1}}$, with radial  derivative  $-C_1\frac{N-p}{p-1}|x|^{-\frac{N-1}{p-1}}$.

Then we only have to set  $-\rob{\frac A N}^{1/(p-1)}r^{1/(p-1)}=-C_1\frac{N-p}{p-1}r^{-(N-1)/(p-1)} $, that is,  
\begin{equation}\label{eq_consthopf}C_1=\rob{\frac A N}^{1/(p-1)}\frac{p-1}{N-p}r^{N/(p-1)}.
\end{equation}
\end{proof}

Proceeding as in the proof of Lemma \ref{lm_ujpos}, we obtain the following Proposition as an immediate consequence of the above Lemma.
\begin{proposition}[Hopf's Lemma] \label{hopf}
Suppose $N>p$, $A,r,\alpha>0$ and $u \in D^{1,p}(\mathbb{R}^N) \cap C_{loc}^{1,\alpha}(\mathbb{R}^N)$ satisfying
$$
  \begin{cases}
  -\Delta_p u\geq A>0& in\ B_r\,, 
\\-\Delta_p u\geq 0 & in\ \R^N\,, 
\\u\geq 0& in\ \R^N\,.
\end{cases}
$$
Then $u(x)\geq C|x|^{(p-N)/(p-1) }$ for $|x|>r$, where $C$ is given in \eqref{eq_consthopf}.
\end{proposition}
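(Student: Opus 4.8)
The plan is to mimic the comparison argument from the proof of Lemma~\ref{lm_ujpos}, using the explicit $p$-harmonic barrier from Lemma~\ref{lm_z_h} in place of the decaying barrier from Lemma~\ref{lm_z}. First I would invoke Lemma~\ref{lm_z_h} with the given $A$ and $r$ to obtain the explicit radial, radially decreasing solution $z$ of problem~\eqref{eq_probz_h}, normalized so that $\lim_{|x|\to\infty}z(x)=0$; by construction its exterior part is $C_1|x|^{(p-N)/(p-1)}$ with $C_1$ exactly the constant $C$ in \eqref{eq_consthopf}, while its interior part is bounded. The hypotheses on $u$ say precisely that $-\Delta_p u\geq -\Delta_p z$ in the weak sense on all of $\R^N$: indeed in $B_r$ we have $-\Delta_p u\geq A=-\Delta_p z$, and in $\R^N\setminus B_r$ we have $-\Delta_p u\geq 0=-\Delta_p z$. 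The assumption $u\in D^{1,p}(\R^N)\cap C^{1,\alpha}_{loc}(\R^N)$ guarantees that $u$ is an admissible competitor and that the weak formulation makes sense against nonnegative test functions $\phi\in D^{1,p}(\R^N)$.

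Next, for an arbitrary $R>r$ I would define the truncated barrier $z_R$ by subtracting from $z$ the constant $z(R)$ (the value on $\partial B_R$), so that $z_R=0$ on $\partial B_R$ and $z_R>0$ in $B_R$ since $z$ is radially decreasing. Because subtracting a constant does not change $-\Delta_p z$, the function $z_R$ still satisfies the same differential inequalities, hence $-\Delta_p u\geq -\Delta_p z_R$ weakly in $B_R$. On $\partial B_R$ we have $u\geq 0=z_R$ using $u\geq0$. The Comparison Principle for the $p$-Laplacian then yields $u\geq z_R$ throughout $B_R$.

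Finally I would let $R\to\infty$. Since $z_R=z-z(R)$ and $z(R)\to0$ as $R\to\infty$ (because $\lim_{|x|\to\infty}z=0$), we have $z_R\to z$ uniformly on each fixed region, so passing to the limit in $u\geq z_R$ gives $u\geq z$ on all of $\R^N$. In particular, for $|x|>r$ this reads
$$
u(x)\geq z(x)=C|x|^{(p-N)/(p-1)},
$$
with $C=C_1$ as in \eqref{eq_consthopf}, which is exactly the desired lower bound.

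The only delicate point, and the one I would state most carefully, is the application of the Comparison Principle: one must confirm that the weak differential inequality $-\Delta_p u\geq -\Delta_p z_R$ together with the boundary ordering $u\geq z_R$ on $\partial B_R$ genuinely forces $u\geq z_R$ inside $B_R$ for the quasilinear operator $\Delta_p$. This is standard for the $p$-Laplacian on bounded domains (it follows, for instance, from testing the difference of the two weak inequalities against $(z_R-u)^+$ and exploiting the strict monotonicity of $\xi\mapsto|\xi|^{p-2}\xi$, via the Simon-type inequality already used in the proof of Lemma~\ref{lemma3}), but since $z_R$ is only piecewise defined with a gradient that is continuous across $|x|=r$, I would note that $z_R$ is a genuine weak solution of \eqref{eq_probz_h} in $B_R$ so that no spurious measure appears on the interface $\partial B_r$; this continuity of the radial derivative was exactly what was arranged in Lemma~\ref{lm_z_h}. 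The remaining steps are routine.
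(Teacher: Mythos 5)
Your proposal is correct and follows essentially the same route as the paper: the paper proves Proposition \ref{hopf} precisely by ``proceeding as in the proof of Lemma \ref{lm_ujpos}'' with the barrier of Lemma \ref{lm_z_h}, i.e.\ comparison with the truncated barrier $z_R$ on $B_R$ followed by letting $R\to\infty$, which is exactly your argument. Your additional remarks --- that the matching of radial derivatives at $|x|=r$ makes $z$ a genuine weak solution (so no interface term arises) and that the comparison principle reduces to testing with $(z_R-u)^+$ and monotonicity of $\xi\mapsto|\xi|^{p-2}\xi$ --- merely make explicit details the paper leaves implicit.
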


The above Proposition complements,  in some sense, the study made in \cite[Theorem 3.1]{CTT}, which obtained a similar estimate when $u$ is a solution of a particular class of $p$-Laplacian problems in the whole $\mathbb{R}^N$. 

\begin{remark}\label{rm_udec}
Proposition \ref{hopf} applies in particular to the limit solution $u$ obtained in Lemma \ref{lm_subtou}, providing us with its decay rate at infinity.
\end{remark}

Finally, from Proposition \ref{hopf} it is straightforward to derive Proposition \ref{prop_Liou}.

%
%
%
%
%
%
%
%
%
%
%
%
%
%

\providecommand{\bysame}{\leavevmode\hbox to3em{\hrulefill}\thinspace}

%
%

\begin{thebibliography}{AdHdS20}

\bibitem[AAB94]{AmbArcBuff}
A.~{Ambrosetti}, D.~{Arcoya}, and B.~{Buffoni}, \emph{{Positive solutions for
  some semi-positone problems via bifurcation theory}}, {Differ. Integral Equ.}
  \textbf{7} (1994), no.~3-4, 655--663 (English).

\bibitem[ACS93]{AliCaShi}
Ismael {Ali}, Alfonso {Castro}, and R.~{Shivaji}, \emph{{Uniqueness and
  stability of nonnegative solutions for semipositone problems in a ball}},
  {Proc. Am. Math. Soc.} \textbf{117} (1993), no.~3, 775--782 (English).

\bibitem[AdHdS20]{AldHoSa19_semipos_RN}
C.~O. Alves, A.~R.~F. de~Holanda, and J.~A. dos Santos, \emph{Existence of
  positive solutions for a class of semipositone problem in whole
  {$\Bbb{R}^N$}}, Proc. Roy. Soc. Edinburgh Sect. A \textbf{150} (2020), no.~5,
  2349--2367.

\bibitem[AdHS19]{AldHoSa19_semipos_omega}
C.~O. Alves, A.~R.~F. de~Holanda, and J.~A. Santos, \emph{Existence of positive
  solutions for a class of semipositone quasilinear problems through
  {O}rlicz-{S}obolev space}, Proc. Amer. Math. Soc. \textbf{147} (2019), no.~1,
  285--299.

\bibitem[AHS96]{AnuHaiShi1996}
V.~{Anuradha}, D.~D. {Hai}, and R.~{Shivaji}, \emph{{Existence results for
  superlinear semipositone BVP's}}, {Proc. Am. Math. Soc.} \textbf{124} (1996),
  no.~3, 757--763 (English).

\bibitem[ANZ92]{AllNisZecca}
W.~{Allegretto}, P.~{Nistri}, and P.~{Zecca}, \emph{{Positive solutions of
  elliptic non-positone problems}}, {Differ. Integral Equ.} \textbf{5} (1992),
  no.~1, 95--101 (English).

\bibitem[AR73]{AmbRab1973}
Antonio {Ambrosetti} and Paul~H. {Rabinowitz}, \emph{{Dual variational methods
  in critical point theory and applications}}, {J. Funct. Anal.} \textbf{14}
  (1973), 349--381 (English).

\bibitem[CCSU07]{CalCasShiUns2007}
Scott {Caldwell}, Alfonso {Castro}, Ratnasingham {Shivaji}, and Sumalee
  {Unsurangsie}, \emph{{Positive solutions for classes of multiparameter
  elliptic semipositone problems}}, {Electron. J. Differ. Equ.} \textbf{2007}
  (2007), 10 (English), Id/No 96.

\bibitem[CdL16]{CFELo}
Alfonso {Castro}, Djairo~G. {de Figueredo}, and Emer {Lopera}, \emph{{Existence
  of positive solutions for a semipositone \(p\)-Laplacian problem}}, {Proc. R.
  Soc. Edinb., Sect. A, Math.} \textbf{146} (2016), no.~3, 475--482 (English).

\bibitem[CDS15]{CDS}
M.~{Chhetri}, P.~{Dr\'abek}, and R.~{Shivaji}, \emph{{Existence of positive
  solutions for a class of \(p\)-Laplacian superlinear semipositone problems}},
  {Proc. R. Soc. Edinb., Sect. A, Math.} \textbf{145} (2015), no.~5, 925--936
  (English).

\bibitem[Cha81]{Chang81_VMnsm}
K.~C. Chang, \emph{Variational methods for nondifferentiable functionals and
  their applications to partial differential equations}, J. Math. Anal. Appl.
  \textbf{80} (1981), no.~1, 102--129.

\bibitem[Cla90]{Clarke_nonsmooth}
F.~H. Clarke, \emph{Optimization and nonsmooth analysis}, second ed., Classics
  in Applied Mathematics, vol.~5, Society for Industrial and Applied
  Mathematics (SIAM), Philadelphia, PA, 1990.

\bibitem[CRQT17]{CoRQTeh_semipos}
D.~G. Costa, H.~Ramos~Quoirin, and H.~Tehrani, \emph{A variational approach to
  superlinear semipositone elliptic problems}, Proc. Amer. Math. Soc.
  \textbf{145} (2017), no.~6, 2661--2675.

\bibitem[CRT17]{CQT}
David~G. {Costa}, Humberto {Ramos Quoirin}, and Hossein {Tehrani}, \emph{{A
  variational approach to superlinear semipositone elliptic problems}}, {Proc.
  Am. Math. Soc.} \textbf{145} (2017), no.~6, 2661--2675 (English).

\bibitem[CS88]{CasShi1988}
Alfonso {Castro} and R.~{Shivaji}, \emph{{Non-negative solutions for a class of
  non-positone problems}}, {Proc. R. Soc. Edinb., Sect. A, Math.} \textbf{108}
  (1988), no.~3-4, 291--302 (English).

\bibitem[CTT12]{CTT}
D.~G. Costa, H.~Tehrani, and R.~Thomas, \emph{Estimates at infinity for
  positive solutions to a class of {$p$}-{L}aplacian problems in {$\Bbb R^N$}},
  J. Math. Anal. Appl. \textbf{391} (2012), no.~1, 170--182.

\bibitem[CTY06]{CTY}
David~G. {Costa}, Hossein {Tehrani}, and Jianfu {Yang}, \emph{{On a variational
  approach to existence and multiplicity results for semipositone problems}},
  {Electron. J. Differ. Equ.} \textbf{2006} (2006), 10 (English), Id/No 11.

\bibitem[DC12]{DC}
Abdou~K. {Drame} and David~G. {Costa}, \emph{{On positive solutions of
  one-dimensional semipositone equations with nonlinear boundary conditions}},
  {Appl. Math. Lett.} \textbf{25} (2012), no.~12, 2411--2416 (English).

\bibitem[Eva98]{Evans}
L.~C. Evans, \emph{Partial differential equations}, Graduate Studies in
  Mathematics, vol.~19, American Mathematical Society, Providence, RI, 1998.

\bibitem[FMS21]{FigMasSan_KirchSemipos}
G.~M. Figueiredo, E.~Massa, and J.A. Santos, \emph{Existence of positive
  solutions for a class of semipositone problems with kirchhoff operator},
  Annales Fennici Mathematici \textbf{46} (2021), no.~2, 655--666.

\bibitem[GPPS03]{MR2008685}
Filippo Gazzola, Bert Peletier, Patrizia Pucci, and James Serrin,
  \emph{Asymptotic behavior of ground states of quasilinear elliptic problems
  with two vanishing parameters. {II}}, Ann. Inst. H. Poincar\'{e} C Anal. Non
  Lin\'{e}aire \textbf{20} (2003), no.~6, 947--974.

\bibitem[Jea11]{Jea1_cont}
Louis Jeanjean, \emph{Some continuation properties via minimax arguments},
  Electron. J. Differential Equations (2011), No. 48, 10.

\bibitem[Lie88]{L1}
G.~M. Lieberman, \emph{Boundary regularity for solutions of degenerate elliptic
  equations}, Nonlinear Anal. \textbf{12} (1988), no.~11, 1203--1219.

\bibitem[Lie91]{L2}
Gary~M. Lieberman, \emph{The natural generalization of the natural conditions
  of {L}adyzhenskaya and {U}ral'tseva for elliptic equations}, Comm. Partial
  Differential Equations \textbf{16} (1991), no.~2-3, 311--361.

\bibitem[PA97]{PA}
I.~Peral~Alonso, \emph{Multiplicity of solutions for the $p$-laplacian}, Second
  School of Nonlinear Functional Analysis and Applications to Differential
  Equations, Trieste, 1997.

\bibitem[PSS20]{PeShSi_semiposCrit}
K.~Perera, R.~Shivaji, and I.~Sim, \emph{A class of semipositone
  {$p$}-{L}aplacian problems with a critical growth reaction term}, Adv.
  Nonlinear Anal. \textbf{9} (2020), no.~1, 516--525.

\bibitem[Tol84]{Tolksdorf}
P.~Tolksdorf, \emph{Regularity for a more general class of quasilinear elliptic
  equations}, J. Differential Equations \textbf{51} (1984), no.~1, 126--150.

\bibitem[Tru67]{Trud67_HarnType}
Neil~S. Trudinger, \emph{On {H}arnack type inequalities and their application
  to quasilinear elliptic equations}, Comm. Pure Appl. Math. \textbf{20}
  (1967), 721--747.

\end{thebibliography}
\end{document}